\newtheorem{thm}{Theorem}[section]
\newtheorem{lem}[thm]{Lemma}
\newtheorem{prop}[thm]{Proposition}
\newtheorem{corollary}[thm]{Corollary}
\newtheorem{defn}[thm]{Definition}
\newtheorem{remark}[thm]{Remark}
\newtheorem*{ass}{Assumption}
\theoremstyle{nonumberbreak}
\newtheorem{proof}{Proof}
\newenvironment{cond}[1]{\begin{ass}[#1] \def\@currentlabelname{\upshape\boldmath\textbf{(#1)}}}{\end{ass}}
\newcommand*{\R}{\mathbb{R}}
\newcommand*{\C}{\mathbb{C}}
\newcommand*{\N}{\mathbb{N}}
\newcommand*{\eps}{\varepsilon}
\newcommand*{\cu}{\mathrm{i}}
\newcommand*{\core}{\mathcal{C}}
\newcommand*{\tr}{\operatorname{tr}}
\DeclareMathOperator{\esssup}{ess\,sup}
\DeclareMathOperator{\essinf}{ess\,inf}
\newcommand*{\defeq}{\mathrel{\vcenter{\baselineskip0.5ex \lineskiplimit0pt
			\hbox{\scriptsize.}\hbox{\scriptsize.}}}%
	=}
\numberwithin{equation}{section}
\title{Convergence Rate for Degenerate Partial and Stochastic Differential Equations via weak Poincaré Inequalities}
\author{Alexander Bertram\thanks{Department of Mathematics, TU Kaiserslautern, PO Box 3049, 67653 Kaiserslautern}\ \textsuperscript{,}\thanks{krampe@mathematik.uni-kl.de (corresponding author)} \and Martin Grothaus\footnotemark[1]\ \textsuperscript{,}\thanks{grothaus@mathematik.uni-kl.de}}
\begin{document}
\maketitle

\begin{abstract}
\noindent\small\textsc{Abstract}.
	We employ weak hypocoercivity methods to study the long-term behavior of operator semigroups generated by degenerate Kolmogorov operators	with variable second-order coefficients, which solve the associated abstract Cauchy problem. We prove essential m-dissipativity of the operator, which extends previous results and is key to the rigorous analysis required. We give estimates for the $L^2$-convergence rate by using weak Poincaré inequalities. As an application, we obtain estimates for the (sub-)exponential convergence rate of solutions to the corresponding degenerate Fokker-Planck equations and of weak solutions to the corresponding degenerate stochastic differential equation with multiplicative noise.
\end{abstract}

{\small\textbf{MSC (2020):} 37A25, 47D07, 35B40, 37J25, 60H10}

{\small\textbf{Keywords:}	Degenerate diffusion semigroup, convergence rate, weak hypocoercivity, multiplicative noise, essential m-dissipativity

\section{Introduction and main results}

Hypocoercivity methods go back to Villani (\cite{Villani}), were further developed by Dolbeault, Mouhot and Schmeiser (\cite{DMS09}) as well as Grothaus and Stilgenbauer (\cite{GS14}), and generalized by Grothaus and Wang (\cite{GW19}) to the case with weak Poincaré inequalities. Since then, the latter have been successfully used to derive concrete convergence rates
for solutions to degenerate Fokker-Planck partial differential equations, see for example \cite{GS16,GMS18,BG21}, where last mentioned reference allows for velocity-dependent second-order coefficients. In order to obtain the rate of convergence to their stationary solution and to their equilibrium measure for solutions to degenerate stochastic differential equations, respectively, it is assumed that a Poincaré inequality exists for the measure induced by the potential giving the force. The same is necessary for the complementary result \cite{Wu2001}, which uses a Lyapunov function approach instead. For further complementary results using the Lyapunov function approach see \cite{HM17, BGH21} and references therein. In contrast, the earlier mentioned  framework by Grothaus and Wang is based on weak Poincaré inequalities introduced by Wang and Röckner in \cite{RW01}, which exist under very weak conditions.
This allows for sub-exponential convergence rate estimates, but the applications worked out there still require constant second-order coefficients. We generalize their result to coefficients depending on the second component, while also slightly relaxing the assumptions in the constant case.

We are concerned with the long-term behavior of solutions to the following Itô stochastic differential equation
for $(X_t,Y_t)$ on $E\defeq \R^{d_1}\times\R^{d_2}$, where $d_1,d_2\in\N$ may be different:
\begin{equation}\label{eq:sde}
\begin{aligned}
	\mathrm{d} X_t &= Q\nabla\Psi(Y_t)\,\mathrm{d}t \\
	\mathrm{d} Y_t &= \sqrt{2}\sigma(Y_t)\mathrm{d}B_t - (Q^*\nabla\Phi(X_t)-b(Y_t))\,\mathrm{d}t,
\end{aligned}
\end{equation}
with
\[
	b(y) = (b_i(y))_{1\leq i\leq d_2}
	\quad\text{ and }\quad
	b_i(y)\defeq \sum_{j=1}^{d_2}  \partial_j a_{ij}(y)-a_{ij}(y)\partial_j \Psi(y).
\]
Here $Q$ is a matrix in $\R^{d_1\times d_2}$ with adjoint $Q^*$, $\sigma:\R^{d_2}\to \R^{d_2\times d_2}$ is uniformly non-degenerate with coefficients in $C^1(\R^{d_2})$, and $(a_{ij})_{1\leq i,j\leq d_2}=\Sigma=\sigma\sigma^T$.
Furthermore, the potentials $\Phi \in C^2(\R^{d_1})$, $\Psi\in C^2(\R^{d_2})$ satisfy $Z(\Phi), Z(\Psi) <\infty$,
where $Z(V)\defeq \int_{\R^{d_i}} \mathrm{e}^{-V(x)}\,\mathrm{d}x$.

By applying the Itô transform, we obtain the corresponding Kolmogorov operator $L$, which is given for $f\in C_c^\infty(E)$ by
\begin{equation}
	Lf(x,y) = \tr[\Sigma H_yf] + \langle b(y),\nabla_y f\rangle
				+ \langle Q\nabla\Psi(y),\nabla_x f\rangle - \langle Q^*\nabla\Phi(x),\nabla_y f\rangle,
\end{equation}
where $\langle\cdot,\cdot\rangle$ denotes the Euclidean inner product on $\R^{d_i}$,
$\nabla_x$ and $\nabla_y$ are the gradient operators in the component $x\in\R^{d_1}$ and $y\in\R^{d_2}$ respectively, and $H_yf$ denotes the Hessian matrix of $f$ in the component $y$.
Define the probability measures $\mu_i$ on $(\R^{d_i},\mathcal{B}(\R^{d_i}))$
by $\mu_1\defeq Z(\Phi)^{-1}\mathrm{e}^{-\Phi(x)}\,\mathrm{d}x$ and
$\mu_2\defeq Z(\Psi)^{-1}\mathrm{e}^{-\Psi(y)}\,\mathrm{d}y$ respectively,
then the product measure $\mu\defeq \mu_1\otimes\mu_2$ is a probability measure on the product space $E$.

The aim of this paper is twofold: For one, we show that $(L,\core)$ is essentially m-dissipative on the Hilbert space $X\defeq L^2(E;\mu)$ so that its closure $(L,D(L))$ generates a sub-Markovian $C_0$-semigroup on $X$,
as long as some growth conditions on the potentials and the diffusion coefficients are satisfied. This generalizes previous m-dissipativity results, see \cite{CG10,GN20,BG21}, by allowing for variable second order coefficients and a non-Gaussian measure $\mu_2$ simultaneously. Furthermore it is of central importance to prove our second result, which gives $L^2$-convergence rate estimates of the generated semigroup to a constant under additional conditions.

Besides the application to weak solutions of the SDE \eqref{eq:sde}, we obtain more immediately convergence rate estimates for the solution to the initial value problem
\begin{equation}\label{eq:cauchy-fp}
u(0)=u_0, \qquad
\partial_t u(t) = L^\mathrm{FP}u(t), \quad \text{ for all }t\geq 0,
\end{equation}
to its stationary solution, where $L^\mathrm{FP}$ is given on $C_c^\infty(\R^{d_1})\otimes C_c^\infty(\R^{d_2})$ by
\begin{equation}\label{eq:proto-fokker-planck}
L^\mathrm{FP}f = \sum_{i,j=1}^{d_2} \partial_{y_i}(a_{ij} \partial_{y_j}f + a_{ij}\partial_j\Psi f) -Q\nabla\Psi\cdot\nabla_xf+Q^*\nabla\Phi\nabla_yf,
\end{equation}
see \Cref{sec:fokker-planck} for specifics.

To ensure that the closure of $(L,\core)$ on $X$ generates a $C_0$-semigroup, we need the following conditions:

\begin{cond}{$\Phi$1}\label{ass:x-potential-1}
	The potential $\Phi:\R^{d_1}\to\R$ is assumed to depend only on the first variable $x$, to be bounded from below and locally Lipschitz-continuous, as well as satisfy $Z(\Phi)<\infty$.
\end{cond}
Note that the first part implies $\Phi\in H_\text{loc}^{1,\infty}(\R^{d_1})$. Moreover, $\Phi$ is differentiable $\mathrm{d}x$-a.e. on $\R^{d_1}$,
such that the weak gradient and the derivative of $\Phi$ coincide $\mathrm{d}x$-a.e. on $\R^{d_1}$. In the following, we fix a version
of $\nabla \Phi$.

The potential $\Psi:\R^{d_2}\to\R$ is assumed to depend only on the second variable $y$ and satisfy the following:
\begin{cond}{$\Psi$1}\label{ass:y-potential-1}
	$\Psi$ is measurable, locally bounded, and satisfies $Z(\Psi)<\infty$.
\end{cond}
\begin{cond}{$\Psi$2}\label{ass:y-potential-2}
	It holds that $\Psi\in H_{\mathrm{loc}}^{2,1}(\R^{d_2},\mu_2)$
	as well as $\partial_j\Psi\in L^2_{\mathrm{loc}}(\R^{d_2},\mu_2)$ for $1\leq j\leq d_2$.
\end{cond}
\begin{cond}{$\Psi$3}\label{ass:y-potential-3}
	There are constants $K<\infty$ and $\alpha\in[1,2)$ such that
	\[
		|\nabla^2\Psi|\leq K(1+|\nabla\Psi|^\alpha),
	\]
	where $\nabla^2\Psi$ denotes the Hessian matrix of $\Psi$.
\end{cond}

The second-order coefficient matrix $\Sigma$ satisfies:
\begin{cond}{$\Sigma$1}\label{ass:ellipticity}
	$\Sigma$ is symmetric and uniformly strictly elliptic, i.e.~there is some $0<c_\Sigma<\infty$ such that
		\[
			\langle v,\Sigma(y) v\rangle \geq c_\Sigma^{-1}\cdot |v|^2\quad\text{ for all }v\in\R^{d_2} \text{ and $\mu_2$-almost all }y\in\R^{d_2}.
		\]
\end{cond}
\begin{cond}{$\Sigma$2}\label{ass:coeff-derivatives}
	For each $1\leq i,j\leq d_2$, $a_{ij}$ is bounded and locally Lipschitz-continuous, which implies $a_{ij}\in H_\text{loc}^{1,\infty}(\R^{d_2})$.
\end{cond}
\begin{cond}{$\Sigma$3}\label{ass:coeff-growth}
	There are constants $0\leq M<\infty$, $0\leq \beta <1$ such that for all $1\leq i,j,k\leq d_2$
	\[
		|\partial_k a_{ij}(y)|\leq M(\mathds{1}_{B_1(0)}(y)+|y|^\beta)\quad\text{ for $\mu_2$-almost all }y\in\R^{d_2}.
	\]
\end{cond}

Based on the last assumption, we introduce one more restriction on the growth behavior of $\Phi$:
\begin{cond}{$\Phi$2}\label{ass:x-potential-2}
	Assume that $|\nabla\Phi|\in L^2(\R^{d_1},\mu_1)$ and that there is some $N<\infty$ such that
	\[
		|\nabla\Phi(x)|\leq N(1+|x|^\gamma),\quad\text{ where }\gamma<\frac{1}{\beta},
	\]
	in case that $\beta$ being provided by \nameref{ass:coeff-growth} is strictly positive.
\end{cond}

\begin{thm}\label{thm:ess-m-diss}
	Let \nameref{ass:ellipticity}-\nameref{ass:coeff-growth}, \nameref{ass:y-potential-1}-\nameref{ass:y-potential-3} and \nameref{ass:x-potential-1}-\nameref{ass:x-potential-2} be fulfilled.
	Then the linear operator $(L,C_c^\infty(\R^{d_1})\otimes C_c^\infty(\R^{d_2}))$ is essentially m-dissipative
	and hence its closure $(L,D(L))$ generates a strongly continuous contraction semigroup $(T_t)_{t\geq 0}$ on $X$.
	Moreover, $(T_t)_{t\geq 0}$ is sub-Markovian, conservative, and possesses $\mu$ as an invariant measure,
	i.e.~$\int_E T_tf\,\mathrm{d}\mu=\int_E f\,\mathrm{d}\mu$ for all $f\in X$.
\end{thm}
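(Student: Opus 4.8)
The plan is to establish essential m-dissipativity of $(L,\core)$ by verifying the hypotheses of an abstract perturbation criterion for Kolmogorov operators — the natural candidate being a result in the spirit of the operator decomposition $L = S + A$ into a symmetric part $S$ and an antisymmetric part $A$, where the antisymmetric "transport" part $A f = \langle Q\nabla\Psi, \nabla_x f\rangle - \langle Q^*\nabla\Phi, \nabla_y f\rangle$ is treated as a perturbation of the symmetric diffusion operator $S f = \tr[\Sigma H_y f] + \langle b, \nabla_y f\rangle$ acting only in the $y$-variable. First I would check that $(S,\core)$ is essentially self-adjoint (or at least essentially m-dissipative) and negative on $X$; since $S$ is, up to the tensor structure with $L^2(\R^{d_1},\mu_1)$, the generalized Schrödinger / Dirichlet operator associated with the form $\int \langle \nabla_y f, \Sigma \nabla_y f\rangle\,\mathrm{d}\mu$, this follows from standard results on essential self-adjointness of such operators under \nameref{ass:ellipticity}, \nameref{ass:coeff-derivatives}, \nameref{ass:coeff-growth} together with \nameref{ass:y-potential-1}–\nameref{ass:y-potential-3}; condition \nameref{ass:y-potential-3} controlling $|\nabla^2\Psi|$ by $1 + |\nabla\Psi|^\alpha$ with $\alpha < 2$ is exactly the kind of growth bound that rules out non-uniqueness of the associated diffusion.

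Next I would verify the dissipativity of $L$ itself: for $f \in \core$ one computes $\int_E (Lf)\,f\,\mathrm{d}\mu = \int_E (Sf)\,f\,\mathrm{d}\mu \leq 0$ because the transport part $A$ is antisymmetric in $X$ (an integration by parts using $\operatorname{div}(e^{-\Phi}Q\nabla\Psi) = 0$ in $x$ and the corresponding identity in $y$, which is where the definition of $b$ as $b_i = \sum_j \partial_j a_{ij} - a_{ij}\partial_j\Psi$ is used — it makes $b$ the logarithmic-derivative correction so that $S$ is symmetric with respect to $\mu_2$). The core analytic step is then the range condition: I must show $(\lambda - L)\core$ is dense in $X$ for some $\lambda > 0$. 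The strategy is to treat $A$ as a relatively bounded perturbation of $S$ and apply a perturbation theorem; the crucial estimates are (i) $\|A f\| \lesssim \|(\text{something involving } S) f\| + \|f\|$, which needs $|\nabla\Phi| \in L^2(\mu_1)$ and the growth bound \nameref{ass:x-potential-2} $|\nabla\Phi(x)| \leq N(1 + |x|^\gamma)$ with $\gamma < 1/\beta$ — the point being that $|x|^\gamma$ in the drift must be dominated by the diffusion coefficients whose derivatives grow like $|y|^\beta$, and (ii) an a priori elliptic regularity estimate showing that solutions of $(\lambda - L)f = g$ for $g$ in a dense subclass actually lie in (or can be approximated in graph norm by elements of) $\core$. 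This typically goes through a mollification/cutoff argument: solve the resolvent equation in a weak (form) sense, obtain $y$-regularity from ellipticity of $\Sigma$, obtain $x$-regularity and integrability from the energy estimate controlling the transport term, then cut off with $\xi_n(x,y)$ and commute $L$ past $\xi_n$, showing the commutator $[\L, \xi_n]$ terms vanish in $X$ as $n\to\infty$ using precisely the $L^2(\mu)$ bounds on $|\nabla\Phi|$, $|\nabla\Psi|$ and the boundedness of $a_{ij}$.

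The properties of the generated semigroup follow once essential m-dissipativity is in hand: sub-Markovianity is obtained from the fact that $L$ satisfies the positivity-preserving / Dirichlet-form-type structure — concretely, one checks $L$ is a Dirichlet operator by verifying $\int (Lf)(f - 1)^+\,\mathrm{d}\mu \leq 0$ or by noting $L1 = 0$ together with the maximum principle available from the diffusion part — so the semigroup is positivity preserving and $L^\infty$-contractive; $\mu$-invariance and conservativity both follow from $L^*1 = 0$ in the appropriate sense, i.e. $\int_E Lf\,\mathrm{d}\mu = 0$ for $f \in \core$, which is again the integration-by-parts identity (the diffusion part integrates to zero by the symmetry with respect to $\mu$, the transport part by antisymmetry), extended to all of $X$ by density and contractivity. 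The main obstacle I anticipate is item (ii) above — pushing the formal dissipativity and range computations through rigorously for merely locally Lipschitz, possibly unbounded $\Phi$ and $\Psi$ and merely locally Lipschitz $a_{ij}$: the commutator estimates for the cutoff functions are delicate precisely because the drift in $x$ grows polynomially while the compensating diffusion degeneracy in $y$ is only $\beta < 1$, so the condition $\gamma < 1/\beta$ must be used sharply, and one has to be careful that the chosen versions of the $\mathrm{d}x$-a.e. and $\mu_2$-a.e. defined gradients are consistent throughout the approximation.
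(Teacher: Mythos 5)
There is a genuine gap: your proposed mechanism for the range condition does not actually work, and the paper's proof replaces it with a qualitatively different device that you do not mention. You propose to treat $A$ as a relatively bounded perturbation of $S$, i.e.\ to establish $\|Af\|\lesssim\|(\text{something involving }S)f\|+\|f\|$. But $S$ is a second-order operator purely in the $y$-variable, whereas the antisymmetric part contains the transport term $Q\nabla\Psi(y)\cdot\nabla_x f$, which differentiates in $x$ with an unbounded $y$-dependent coefficient. There is no way to control $\|\nabla_x f\|$ by the Dirichlet energy $\int\langle\nabla_y f,\Sigma\nabla_y f\rangle\,\mathrm{d}\mu$ and $\|f\|$, so the relative bound you sketch simply does not hold on any core. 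Your fallback of solving $(\lambda - L)f=g$ weakly and running an elliptic-regularity/mollification argument is a genuinely different route that faces the same obstruction head-on: the operator is hypoelliptic but not elliptic, so the $x$-regularity you want to extract from ``the energy estimate controlling the transport term'' is precisely what is difficult, and the proposal gives no mechanism for it.

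The paper's actual proof circumvents this with a two-step transformation you do not use. First it conjugates by the unitary $U f = Z(\Psi)^{-1/2}\mathrm{e}^{-(\Phi+\Psi)/2}f$, moving the problem from $L^2(\mu)$ to flat $L^2(E,\mathrm{d}(x,y))$; there the symmetric part becomes a Schrödinger-type operator $L_0$ with a potential term $\tfrac14\langle\nabla\Psi,\Sigma\nabla\Psi\rangle$, which is what makes the later form bounds possible. Second — and this is the key idea — instead of perturbing $L_0$ by the differential transport $Q\nabla\Psi\cdot\nabla_x$, it perturbs by the \emph{multiplication} operator $A_1 f = \cu\, Q\nabla\Psi(y)\cdot x\, f$. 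Decomposing $L^2$ into the invariant annuli $\{\,n-1\le|x|<n\,\}$ renders $A_1$ relatively form-bounded by $-L_0$ with Kato bound zero (this is Proposition 2.6, where your intuitions about the role of \nameref{ass:y-potential-3} and of the $|\nabla\Psi|^2$-estimate do correctly enter, via Hölder and Young and the identity $\|A_1 f\|^2 \le a(-L_0f,f)+b\|f\|^2$). Only then does a Fourier transform in the $x$-variable, which preserves the core $\mathcal{S}(\R^{d_1})\otimes U_\Psi C_c^\infty(\R^{d_2})$, convert the established m-dissipativity of $L_0 + \cu\,Q\nabla\Psi\cdot x$ into m-dissipativity of $L_0 + Q\nabla\Psi\cdot\nabla_x$. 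The $\Phi$-transport term $Q^*\nabla\Phi\cdot\nabla_y$ is then added by a second Kato perturbation (here your observation that $\nabla_y$ \emph{can} be controlled by the form of $L_0$, together with boundedness of $\nabla\Phi$ in the Lipschitz case, is correct), and the step from Lipschitz to locally Lipschitz data is an approximation argument — truncate $\Sigma$ to $\Sigma_n$ and $\Phi$ to $\Phi_m$, then use Lemma 2.9 and the condition $\beta<\alpha<1/\gamma$ as in [BG21]. Your last two paragraphs on this approximation step and on deriving sub-Markovianity, conservativity and $\mu$-invariance from $L1=0$, $(Lf,1)_X=0$ and the Dirichlet-operator structure are essentially in line with the paper. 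In short: the essential self-adjointness of $S$, the identification of the dissipativity mechanism, the role of \nameref{ass:x-potential-2} with $\gamma<1/\beta$, and the endgame are all on track, but without the unitary-transform-plus-Fourier trick there is no way to close the range condition, and that is the step your proposal does not supply.
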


We now present our final result on the convergence rate of the generated semigroup.
Note that the assumptions in the following theorem are stronger than those made above,
so that \Cref{thm:ess-m-diss} applies.

\begin{thm}\label{thm:main-result}
	Let $QQ^*$ be invertible and let $\Sigma$ be uniformly strictly elliptic with each $a_{ij}$ being bounded and locally Lipschitz.
	Let further $M<\infty$, $0\leq\beta<1$ and $1<p_\Sigma\leq\infty$ such that for all $1\leq i,j,k\leq d_2$, the following holds:
	\[
	|\partial_k a_{ij}(y)|\leq M(\mathds{1}_{B_1(0)}(y)+|y|^\beta)
	\quad\text{ and }\quad
	|\partial_k a_{ij}(y)|\in L^{2p_\Sigma}(\mu_2).
	\]
	Let $\Phi\in C^2(\R^{d_1})$ bounded from below with $Z(\Phi)<\infty$,
	and let there be a constant $C<\infty$ such that
	\[
		|\nabla^2\Phi|\leq C(1+|\nabla\Phi|).
	\]
	If $\beta>0$, let further $N<\infty$, $0\leq\gamma<\frac1\beta$ such that
	$|\nabla\Phi(x)|\leq N(1+|x|^\gamma)$.
	
	Let $\psi\in C^3([0,\infty))$, $K<\infty$, $1\leq\alpha<2$, $\Lambda\in\R^{d_2\times d_2}$ invertible, $a\in\R^{d_2}$ such that 
	\[
		\Psi(y)=\psi(|\Lambda y-a|^2),
		\quad Z(\Psi)<\infty,
		\quad\text{ and }\quad
		|\nabla^2\Psi|\leq K(1+|\nabla\Psi|^\alpha).
	\]
	Assume further that $\partial_i\partial_j\partial_k\Psi\in L^2(\mu_2)$ for all $1\leq i,j,k\leq d_2$.
	
	Then there exist decreasing functions $\alpha_\Phi,\alpha_\Psi:(0,\infty)\to [1,\infty)$ and constants $0<c_1,c_2<\infty$ such that
	\begin{align*}
		\mu_1(f^2)-\mu_1(f)^2 &\leq \alpha_\Phi(r)\mu_1(|\nabla_x f|^2)+r\|f\|_\mathrm{osc}^2,
			\quad r>0,f\in C_b^1(\R^{d_1}), \\
		\mu_2(f^2)-\mu_2(f)^2 &\leq \alpha_\Psi(r)\mu_2(|\nabla_y f|^2)+r\|f\|_\mathrm{osc}^2,
			\quad r>0,f\in C_b^1(\R^{d_2}),
	\end{align*}
	and
	\begin{equation}\label{eq:conv-estimate}
		\mu((T_tf)^2)-\mu(T_tf)^2
		\leq c_1\xi(t)\|f\|_{\mathrm{osc}}^2,
		\qquad \text{ for all } t\geq 0,\ f\in L^\infty(\mu),
	\end{equation}
	where
	\begin{equation}\label{eq:conv-rate}
		\xi(t)\defeq \inf\left\{ r>0: c_2t\geq \alpha_1(r)^2\alpha_2\left(\frac{r}{\alpha_1(r)^2}\right)
		\log\left( \frac1{r}\right)  \right\}.
	\end{equation}
\end{thm}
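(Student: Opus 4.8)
The plan is to place the closed operator $(L,D(L))$ from \Cref{thm:ess-m-diss} into the abstract weak hypocoercivity scheme of \cite{GW19} and to verify its hypotheses one by one. On $\core=C_c^\infty(\R^{d_1})\otimes C_c^\infty(\R^{d_2})$ we decompose $L=S+A$ with
\[
	Sf=\tr[\Sigma H_yf]+\langle b(y),\nabla_yf\rangle,
	\qquad
	Af=\langle Q\nabla\Psi(y),\nabla_xf\rangle-\langle Q^*\nabla\Phi(x),\nabla_yf\rangle.
\]
With respect to $\langle\cdot,\cdot\rangle_X$ the operator $(S,\core)$ is symmetric and non-positive — it is the tensor extension of the generator of the $\mu_2$-reversible diffusion with Dirichlet form $f\mapsto\mu_2(\langle\nabla_yf,\Sigma\nabla_yf\rangle)$ — while $(A,\core)$ is antisymmetric. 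Let $P_1$ be the orthogonal projection onto $\ker S=\{f:f=f(x)\}$, i.e.\ $P_1f(x)=\mu_2(f(x,\cdot))$; then $\ker L$ consists of the constants. To invoke the quantitative convergence theorem of \cite{GW19} one checks: (i) closability of $S$, $A$, $L$ together with $\core$ being a core for each, and the algebraic relations $SP_1=P_1S=0$, $P_1AP_1=0$; (ii) weak microscopic coercivity; (iii) weak macroscopic coercivity; and (iv) boundedness of the auxiliary operators $(AP_1)(1+(AP_1)^*AP_1)^{-1}(AP_1)^*$, $BA(1-P_1)$, $BS$ and their adjoints, where $B\defeq(1+(AP_1)^*AP_1)^{-1}(AP_1)^*$. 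Once (i)--(iv) hold, the master theorem of \cite{GW19} returns \eqref{eq:conv-estimate} with $\xi$ of the form \eqref{eq:conv-rate}, the two nested functions $\alpha_1,\alpha_2$ being explicit rescalings of the rate functions $\alpha_\Phi,\alpha_\Psi$ of the weak Poincaré inequalities; the conservativity and sub-Markov property from \Cref{thm:ess-m-diss} give $\|T_tf\|_{\mathrm{osc}}\le\|f\|_{\mathrm{osc}}$, which is what keeps the oscillation norm on the right-hand side throughout.

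For the two weak Poincaré inequalities I would invoke the criteria of Röckner and Wang \cite{RW01}. For $\mu_1=Z(\Phi)^{-1}e^{-\Phi}\,\mathrm{d}x$, local boundedness of $\Phi$ makes its density locally bounded and locally bounded away from zero, so a weak Poincaré inequality exists on $C_b^1(\R^{d_1})$; the growth hypotheses — $|\nabla\Phi|\in L^2(\mu_1)$ and, when $\beta>0$, $|\nabla\Phi|\le N(1+|x|^\gamma)$ with $|\nabla^2\Phi|\le C(1+|\nabla\Phi|)$ — turn this into an explicit decreasing $\alpha_\Phi$. For $\mu_2$ the special shape $\Psi(y)=\psi(|\Lambda y-a|^2)$ is used: the bi-Lipschitz affine substitution $z=\Lambda y-a$ turns $\mu_2$ into a rotationally invariant measure $\propto e^{-\psi(|z|^2)}\,\mathrm{d}z$, for which the weak Poincaré inequality reduces to a one-dimensional problem on $[0,\infty)$ and is explicit in terms of $\psi$; transferring back through the affine map only rescales constants and yields $\alpha_\Psi$. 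Since $\Sigma$ is uniformly strictly elliptic and bounded, $-\langle Sf,f\rangle_X=\mu(\langle\nabla_yf,\Sigma\nabla_yf\rangle)$ is comparable to $\mu(|\nabla_yf|^2)$, so integrating the $\mu_2$-inequality against $\mu_1$ produces weak microscopic coercivity $\|(1-P_1)f\|_X^2\le c_\Sigma\,\alpha_\Psi(r)\,(-\langle Sf,f\rangle_X)+r\|f\|_{\mathrm{osc}}^2$. For (iii), $AP_1f=\langle Q\nabla\Psi(y),\nabla_xP_1f(x)\rangle$, hence $\|AP_1f\|_X^2=\mu_1\bigl(\langle\nabla_xP_1f,\,Q\,\mu_2(\nabla\Psi\otimes\nabla\Psi)\,Q^*\,\nabla_xP_1f\rangle\bigr)$; the integration-by-parts identity $\mu_2(\nabla\Psi\otimes\nabla\Psi)=\mu_2(\nabla^2\Psi)$ and the radial form of $\psi$ give $\mu_2(\nabla^2\Psi)=c'\Lambda^T\Lambda\succ0$, which together with invertibility of $QQ^*$ yields $\|AP_1f\|_X^2\ge\delta\,\mu_1(|\nabla_xP_1f|^2)$ for some $\delta>0$; feeding this into the $\mu_1$-inequality gives weak macroscopic coercivity.

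The core statements in (i) rely on \Cref{thm:ess-m-diss}: it says $\core$ is a core for $L$, and the same essential-m-dissipativity argument applied with $Q=0$ shows $\core$ is a core for the self-adjoint realisation of $S$; closability of $A$ and the two algebraic identities are immediate from the product structure of $\mu$ and from $\mu_2(\nabla\Psi)=0$. The substantive point, and the main obstacle, is (iv): writing out $B$, $BA(1-P_1)$ and $BS$ produces, on top of the contributions already handled in the constant-coefficient case of \cite{GW19}, new terms created by the $y$-dependence of $\Sigma$ — through the drift $b_i=\sum_j\partial_ja_{ij}-a_{ij}\partial_j\Psi$ and through commutators of $\nabla_y$ with $\Sigma$ — and all of them must be shown to extend to bounded operators on $X$. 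This is a bookkeeping problem that is closed precisely by the quantitative hypotheses of the theorem: $|\nabla^2\Phi|\le C(1+|\nabla\Phi|)$ with $\nabla\Phi\in L^2(\mu_1)$ controls the $x$-derivatives; $|\nabla^2\Psi|\le K(1+|\nabla\Psi|^\alpha)$ with $1\le\alpha<2$ together with $\partial_i\partial_j\partial_k\Psi\in L^2(\mu_2)$ controls the $y$-derivatives up to third order (the Gaussian-type tails forced by $Z(\Psi)<\infty$ and $\alpha<2$ make $|\nabla\Psi|^\alpha$ and its derivatives square integrable against $\mu_2$); and $|\partial_ka_{ij}|\in L^{2p_\Sigma}(\mu_2)$ with $p_\Sigma>1$ controls the variable-coefficient terms by Hölder's inequality against $L^{2p_\Sigma/(p_\Sigma-1)}(\mu_2)$-norms of the remaining factors, whose finiteness is where $\gamma<1/\beta$ enters. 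With (i)--(iv) established, the theorem of \cite{GW19} delivers \eqref{eq:conv-estimate}--\eqref{eq:conv-rate}.
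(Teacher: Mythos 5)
Your overall plan coincides with the paper's: decompose $L$ into symmetric part $S$ and antisymmetric part $A$ on $\core$, project onto the $y$-independent functions, verify the conditions \nameref{ass:hypoc-1}--\nameref{ass:weak-poincare} of the weak hypocoercivity scheme (Section~3), obtain the weak Poincar\'e inequalities from \cite[Theorem~3.1]{RW01}, and apply \Cref{thm:abstract-hypoc}. The identification of the antisymmetric part, of the macroscopic coercivity via $\mu_2(\nabla\Psi\otimes\nabla\Psi)=\mu_2(\nabla^2\Psi)$, and of the role of conservativity in keeping the oscillation norm on the right, all match what the paper does.

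However, there is a genuine gap at exactly the point you single out as ``the substantive point, and the main obstacle.'' Condition~\nameref{ass:hypoc-3} for the symmetric part $S$ --- i.e.\ boundedness of $(BS)^*$ --- is the content of the paper that is actually new relative to the constant-coefficient case of \cite{GW19}, and it cannot be dismissed as bookkeeping. The paper handles it by an explicit computation: two integrations by parts against $\mathrm{d}\mu$ identify, for $f\in D$, the multiplication-type operator
\[
	(BS)^*(I-G)f = S^*APf = Tf, \qquad Tf=\sum_{i=1}^{d_1} w_i\,\partial_{x_i}Pf,
\]
with $w_i$ a concrete combination of $\partial_ka_{k\ell}$, $a_{k\ell}$, $\partial_\ell\partial_j\Psi$, $\partial_k\partial_\ell\partial_j\Psi$ and $\partial_k\Psi$ (\Cref{lem:mult-op}); each term in $w_i$ is then placed in $L^2(\mu_2)$ by H\"older against $L^{p_\Sigma}/L^{q_\Sigma}$ pairs combined with the all-order $L^p(\mu_2)$-integrability of $|\nabla\Psi|$ and $|\nabla^2\Psi|$ coming from \Cref{thm:arbitrary-integrability}. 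Only after this is established does $(AP)D\subset D(S^*)$ follow, and \Cref{lem:auxiliary-bound} can then be invoked to bound $(BS)^*$ via $\|\nabla f_P\|_{L^2(\mu_1)}^2\le\mathrm{const}\cdot(-Gf,Pf)$ and dissipativity of $G$. Your proposal states that the hypotheses ``close'' this estimate but never produces $T$, never shows $(AP)D\subset D(S^*)$, and never converts the $L^2(\mu_2)$-bounds on $w_i$ into the required $(BS)^*$-bound. Without this lemma the proof does not go through, and the chosen integrability hypotheses ($|\nabla\Sigma|\in L^{2p_\Sigma}$, $\partial^3\Psi\in L^2(\mu_2)$) remain unmotivated.

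Two smaller points. First, the affine reduction $z=\Lambda y-a$ is not only used to produce the weak Poincar\'e inequality for $\mu_2$: in the paper it is implemented as a unitary conjugation $J$ of the entire operator so that \nameref{ass:y-potential-5} holds, because the lemmas imported verbatim from \cite{GW19} on the antisymmetric part and on $G$ require the canonical radial form $\Psi(y)=\psi(|y|^2)$. Working directly with $\Psi(y)=\psi(|\Lambda y-a|^2)$, as your macroscopic-coercivity computation attempts, would require re-deriving those lemmas. Second, $\gamma<1/\beta$ is used in the proof of essential m-dissipativity (\Cref{thm:ess-m-diss}), not in the H\"older step you attach it to; the finiteness of the $L^{q_\Sigma}(\mu_2)$-factors comes from \Cref{cor:arbitrary-integrability}, which needs only \nameref{ass:y-potential-3}.
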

\begin{remark}
	\begin{enumerate}
		\item If $d_1=d_2$ and $\Psi(y)=\frac12 |y|^2$, i.e. the measure $\mu_2$ is the standard Gaussian measure on $\R^{d_2}$, then \Cref{thm:ess-m-diss} is a slight improvement of \cite[Theorem~4.5]{BG21} due to a better trade-off between $\Sigma$ and $\Phi$. In that case, we get $\alpha_2\equiv c_\Sigma$ in \eqref{eq:conv-rate}. In particular, the convergence is exponential if there is a (strong) Poincaré inequality for $\Phi$.
		
		\item If $\Sigma=I$, then \Cref{thm:main-result} yields the same convergence result as \cite{GW19}.
			However, we show that $\mu_2(|\nabla_y\Psi|^4)<\infty$ need not be assumed and give an alternative condition for $\Psi$ by replacing
			\[
				\sup_{r\geq 0} \left| \psi'(r)+2r\psi''(r)-\frac{2r\psi'''(r)+(d_2+2)\psi''(r)}{\psi'(r)} \right| < \infty
			\]
			with $L^2(\mu_2)$-integrability of the third-order derivatives of $\Psi$.
			
		\item Since the convergence rate $\xi$ depends from the choice of $\Sigma$ only via the ellipticity constant $c_\Sigma$ as a factor in front of $\alpha_2$, we refer to \cite{GW19} for a detailed list of explicit rates depending on the chosen potentials. We list two examples for illustration purposes:
		\begin{enumerate}[(i)]
			\item Let $\Phi(x)=k(1+|x|^2)^{\delta/2}+g_1(x)$ and $\Psi(y)=\kappa(1+|y|^2)^{\eps/2}+g_2(y)$,
				where $k,\kappa,\delta,\eps>0$ are constant and $g_1\in C_b^2(\R^{d_1})$, $g_2 \in C_b^3(\R^{d_2})$ with $g_2(y)=\eta(|y|^2)$ for some $\eta\in C^3([0,\infty))$.
				Then, \eqref{eq:conv-estimate} holds with
				\[
					\xi(t)=\exp(-c_2 t^{\omega(\delta,\eps)}),
					\text{ where }
					\omega(\delta,\eps)=\frac{\delta\eps}{\delta\eps+8\eps(1-\delta)^{+}+4\delta(1-\eps)^{+}},
				\]
				for some constants $0<c_1,c_2<\infty$. In particular, if $\delta,\eps\geq 1$, then the decay rate is exponential.
			\item Let $d\defeq d_1=d_2$, $\Phi(x)=\frac{q+d}2 \log(1+|x|^2)+g_1(x)$ for some $q>0$ and
				$\Psi(y)=\frac{p+d}{2}\log(1+|y|^2)+g_2(y)$ for some $p>0$,
				where $g_1\in C_b^2(\R^{d})$ and $g_2 \in C_b^3(\R^{d})$
				with $g_2(y)=\eta(|y|^2)$ for some $\eta\in C^3([0,\infty))$.
				Then, \eqref{eq:conv-estimate} holds with
				\[
				\begin{aligned}
					\xi(t) &= c_2(1+t)^{-\omega(p,q)}(\log(\mathrm{e}+t))^{\omega(p,q)},
					&& \text{ where }\\
					\omega(p,q) &= \frac1{2\theta(q)+\theta(p)+2\theta(q)\theta(p)}
					&& \text{ and }\\
					\theta(r) &= \frac{d+r+2}{r} \wedge \frac{4r+4+2d}{(r^2-4-2d-2r)^{+}},
				\end{aligned}
				\]
				for some constants $0<c_1,c_2<\infty$.
		\end{enumerate}
	\end{enumerate}
\end{remark}

\section{Essential m-dissipativity of the operator \texorpdfstring{$L$}{L}}

\subsection{Preliminaries}

Throughout this section, we assume \nameref{ass:ellipticity}-\nameref{ass:coeff-growth}, \nameref{ass:y-potential-1}-\nameref{ass:y-potential-3} and \nameref{ass:x-potential-1}-\nameref{ass:x-potential-2} to be satisfied.

\begin{defn}\label{def:coeff-const}
	Depending on the choice of $\Sigma$, we introduce the following constants:
	\begin{align*}
		M_\Sigma &\defeq \max\{ \|a_{ij}\|_\infty: 1\leq i,j \leq d_2 \},\\
		B_\Sigma &\defeq \max\{ |\partial_j a_{ij}(y)|: y\in \overline{B_1(0)},\ 1\leq i,j \leq d_2 \} \qquad\text{ and } \\
		N_\Sigma &\defeq \sqrt{M_\Sigma^2+B_\Sigma^2+d_2M^2}.
	\end{align*}
\end{defn}

We introduce the following useful decomposition of $L$, which we require later to apply the weak hypocoercivity method:
\begin{defn}\label{def:operators}
	Let $\mathcal{C}\defeq C_c^\infty(\R^{d_1})\otimes C_c^\infty(\R^{d_2})$ be the tensor product space of smooth compactly supported functions on $\R^{d_1}$ and $\R^{d_2}$.
	We define the linear operators $S$ and $A$ for $f\in\core$ by
	\begin{align*}
		Sf 	&\defeq \sum_{i,j=1}^{d_2} a_{ij}\partial_{y_j}\partial_{y_i}f + \sum_{i=1}^{d_2} b_i\partial_{y_i}f, \\
			&\quad\text{ where } b_i(y)= \sum_{j=1}^{d_2}(\partial_j a_{ij}(y)-a_{ij}(y)\partial_j\Psi(y)),\\
		Af &\defeq  Q^*\nabla\Phi\cdot\nabla_y f - Q\nabla\Psi\cdot\nabla_x f.
	\end{align*}
	
	Integration by parts shows that $(S,\mathcal{C})$ is symmetric and non-positive definite on $X$, and $(A,\mathcal{C})$ is antisymmetric on $X$, which implies both are dissipative and therefore closable.
	We denote their closure respectively by $(S,D(S))$ and $(A,D(A))$
	Since $(L,\core)=(S-A,\core)$, it is also dissipative and closable, and we denote its closure by $(L,D(L))$.
\end{defn}

We observe that for $f\in \mathcal{C}$ and $g\in H^{1,2}(E,\mu)$, integration by parts yields
\begin{equation}\label{eq:gradient-form}
(Lf,g)_X = -\int_E \left\langle \nabla f, \begin{pmatrix}0 & -Q\\Q^* & \Sigma\end{pmatrix}\nabla g \right\rangle\, \mathrm{d}\mu.
\end{equation}

We also provide the following $L^p$-integrability result, which is based on and vastly generalizes a Lemma from Villani, see \cite[Lemma A.24]{Villani}
\begin{thm}\label{thm:arbitrary-integrability}
	Let $V\in C^1(\R^d)\cap H_{\mathrm{loc}}^{2,\infty}(\R^d)$ satisfy \nameref{ass:y-potential-3} with $\Psi=V$ and let $k\in\N$.
	Then there is a constant $C_k<\infty$ such that for all $g\in H^{1,2k}(\mathrm{e}^{-V}\mathrm{d}x)$,
	the following holds:
	\[
		\int_{\R^d}|\nabla V|^{2k}g^{2k}\mathrm{e}^{-V}\,\mathrm{d}x
		\leq C_k\left( \int_{\R^d}g^{2k}\mathrm{e}^{-V}\,\mathrm{d}x
		+ \int_{\R^d}|\nabla g|^{2k}\mathrm{e}^{-V}\,\mathrm{d}x \right).
	\]
\end{thm}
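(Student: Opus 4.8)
The plan is to prove the estimate first for $g\in C_c^\infty(\R^d)$ and then to remove this restriction by approximation. Since $V\in C^1(\R^d)$ the weight $e^{-V}$ is continuous and strictly positive, so $C_c^\infty(\R^d)$ is dense in $H^{1,2k}(e^{-V}\,\mathrm dx)$: given $g$ in that space, multiply by standard cut-offs $\chi_n$ (equal to $1$ on $B_n$, supported in $B_{2n}$, $|\nabla\chi_n|\le C/n$) to reduce to compact support, then mollify on the resulting fixed compact set, where $e^{-V}$ is comparable to Lebesgue measure. Along such an approximating sequence the right-hand side of the asserted inequality is continuous, while the left-hand side at $g$ does not exceed the $\liminf$ of the left-hand sides along an a.e.\ convergent subsequence by Fatou's lemma; combining the two, the inequality passes to the limit, so it is enough to treat $g\in C_c^\infty(\R^d)$.

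For such $g$ every integral occurring below is finite, because $|\nabla V|$ is bounded on the compact set $\supp g$. Starting from $\nabla(e^{-V})=-\nabla V\,e^{-V}$ and integrating by parts — legitimate since $|\nabla V|^{2k-2}\nabla V\,g^{2k}$ is a compactly supported $H^{1,\infty}$ vector field, which is where $V\in H^{2,\infty}_{\mathrm{loc}}$ enters — one obtains
\begin{equation*}
\int_{\R^d}|\nabla V|^{2k}g^{2k}e^{-V}\,\mathrm dx=\int_{\R^d}\operatorname{div}\!\bigl(|\nabla V|^{2k-2}\nabla V\,g^{2k}\bigr)e^{-V}\,\mathrm dx.
\end{equation*}
Expanding the divergence yields three contributions: $2(k-1)|\nabla V|^{2k-4}\langle\nabla^2V\,\nabla V,\nabla V\rangle g^{2k}$, then $|\nabla V|^{2k-2}\,\Delta V\,g^{2k}$, and finally $2k\,|\nabla V|^{2k-2}\langle\nabla V,\nabla g\rangle g^{2k-1}$. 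The first two are dominated in modulus by $C_{k,d}\,|\nabla V|^{2k-2}|\nabla^2V|\,g^{2k}$ (the first vanishing when $k=1$), and the third by $2k\,|\nabla V|^{2k-1}|\nabla g|\,g^{2k-1}$.

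Now \nameref{ass:y-potential-3} applied with $\Psi=V$ gives $|\nabla V|^{2k-2}|\nabla^2V|\le K\bigl(|\nabla V|^{2k-2}+|\nabla V|^{2k-2+\alpha}\bigr)$, and because $\alpha<2$ both exponents are strictly smaller than $2k$; Young's inequality therefore bounds this by $\eps\,|\nabla V|^{2k}+C_\eps$ for every $\eps>0$. The mixed term is handled by Young's inequality with exponents $\tfrac{2k}{2k-1}$ and $2k$, giving $|\nabla V|^{2k-1}|\nabla g|\,g^{2k-1}\le\delta\,|\nabla V|^{2k}g^{2k}+C_\delta\,|\nabla g|^{2k}$. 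Substituting these bounds, all contributions proportional to $\int|\nabla V|^{2k}g^{2k}e^{-V}$ are absorbed into the left-hand side by choosing $\eps,\delta$ small (depending only on $k,d,K$), and what remains is exactly $C_k\bigl(\int g^{2k}e^{-V}\,\mathrm dx+\int|\nabla g|^{2k}e^{-V}\,\mathrm dx\bigr)$.

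The algebra in the last two paragraphs is routine once set up; the point needing care is the justification of the integration by parts together with the density/Fatou passage to the limit, since $V$ is only $C^1$ with a merely locally bounded Hessian and $e^{-V}$ need not have finite mass. This is precisely why one first reduces to compactly supported smooth $g$, for which the weighted integrals are automatically finite and the vector field inside the divergence is of class $H^{1,\infty}$ with compact support.
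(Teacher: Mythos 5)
Your proof is correct and reaches the same conclusion, but the core estimate is organized differently and, I would say, more cleanly than in the paper. The paper follows Villani's Lemma A.24 closely: it carries out the computation only for $k=1$, asserting that general $k$ follows ``analogously after adjusting the constant,'' and it handles the term arising from $|\Delta V|\le \sqrt{d}K(1+|\nabla V|^\alpha)$ by Cauchy--Schwarz followed by Young, which replaces the exponent $\alpha$ by $2(\alpha-1)$; when $\alpha>3/2$ this new exponent exceeds $1$ and the paper must iterate, producing exponents $\gamma_m(\alpha)=2^m(\alpha-2)+2$ that decrease at each step until $\gamma_{m_0}(\alpha)\le 1$, at which point absorption succeeds. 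You instead write out the general-$k$ case explicitly via the divergence of the vector field $|\nabla V|^{2k-2}\nabla V\,g^{2k}$, and you dispose of the problematic factor $|\nabla V|^{2k-2}|\nabla^2V|\le K(|\nabla V|^{2k-2}+|\nabla V|^{2k-2+\alpha})$ in a single stroke by the pointwise Young bound $|\nabla V|^p\le\eps|\nabla V|^{2k}+C_\eps$ valid for any $p<2k$, which applies immediately since $2k-2+\alpha<2k$ whenever $\alpha<2$. This eliminates the iteration entirely and makes the dependence on $\alpha$ transparent. Your density reduction (cutoff plus mollification, with Fatou on the left-hand side and continuity on the right) matches the paper's reduction to $C_c^\infty$ in substance. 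Two small points of hygiene: in the mixed term the bound should read $2k|\nabla V|^{2k-1}|\nabla g|\,|g|^{2k-1}$ rather than with $g^{2k-1}$, since $g$ may change sign; and it is worth noting explicitly that after Young you need the total coefficient in front of $\int|\nabla V|^{2k}g^{2k}e^{-V}$ to be strictly below $1$ before absorbing, which your choice of small $\eps,\delta$ accomplishes.
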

\begin{proof}
	The proof is derived from the proof of \cite[Lemma A.24]{Villani}, which shows the statement for $\alpha=1$ and $k=1$. We therefore assume $\alpha>1$, but also only show the proof for $k=1$, as the other cases follow analogously after adjusting the constant.
	Without loss of generality, we assume that $g\in C_c^\infty(\R^{d})$, since that space is dense in $H^{1,2k}(\mathrm{e}^{-V(x)}\,\mathrm{d}x)$ (which can be proven analogously to the well-known denseness result in $H^{m,p}(\Omega)$ for $\Omega$ satisfying the segment property, since $\mathrm{e}^{-V(x)}\,\mathrm{d}x$ is locally equivalent to the Lebesgue measure).
	
	Let the measure $\mathrm{e}^{-V}\mathrm{d}x$ be denoted by $\mu_V$. Then, integration by parts yields
	\[
	\int_{\R^d}|\nabla V|^2g^2\,\mathrm{d}\mu_V
	= -\int_{\R^d}g^2\langle\nabla V, \nabla(\mathrm{e}^{-V})\rangle\,\mathrm{d}x
	= \int_{\R^d}g^2\Delta V \,\mathrm{d}\mu_V
	+ \int_{\R^d}2g\langle\nabla V, \nabla g\rangle \,\mathrm{d}\mu_V.
	\]
	Due to \nameref{ass:y-potential-3}, it holds that
	\[
	|\Delta V|=\sqrt{(\Delta V)^2}\leq \sqrt{d}\,|\nabla^2 V|\leq \sqrt{d}K(1+|\nabla V|^\alpha).
	\]
	This with Cauchy–Bunyakovsky–Schwarz implies
	\[
	\int_{\R^d}g^2\Delta V \,\mathrm{d}\mu_V
	\leq \sqrt{d}K\left(\int_{\R^d}g^2\,\mathrm{d}\mu_V
	+\sqrt{\int_{\R^d}g^2|\nabla V|^2 \,\mathrm{d}\mu_V}
	\sqrt{\int_{\R^d}g^2|\nabla V|^{2(\alpha-1)} \,\mathrm{d}\mu_V} \right),
	\]
	and Young's inequality further gives
	\[
	\int_{\R^d}g^2\Delta V \,\mathrm{d}\mu_V
	\leq \sqrt{d}K\int_{\R^d}g^2\,\mathrm{d}\mu_V
	+\frac14 \int_{\R^d}|\nabla V|^2g^2\,\mathrm{d}\mu_V
	+dK^2 \int_{\R^d}|\nabla V|^{2(\alpha-1)}g^2\,\mathrm{d}\mu_V.
	\]
	Similarly, it holds that
	\[
	\begin{aligned}
	2\int_{\R^d}g\langle\nabla V, \nabla g\rangle \,\mathrm{d}\mu_V
	&\leq 2\sqrt{\int_{\R^d}g^2|\nabla V|^2 \,\mathrm{d}\mu_V}
	\sqrt{\int_{\R^d}|\nabla g|^2 \,\mathrm{d}\mu_V}\\
	&\leq \frac14 \int_{\R^d}|\nabla V|^2g^2\,\mathrm{d}\mu_V
	+4\int_{\R^d}|\nabla g|^2 \,\mathrm{d}\mu_V.
	\end{aligned}
	\]
	Adding these estimates together and subtracting $\frac12\int_{\R^{d}} |\nabla V|^2g^2\,\mathrm{d}\mu_V$ from both sides, we obtain
	\[
	\int_{\R^d}|\nabla V|^2g^2\,\mathrm{d}\mu_V
	\leq 2\sqrt{d}K\int_{\R^d}g^2\,\mathrm{d}\mu_V
	+8\int_{\R^d}|\nabla g|^2 \,\mathrm{d}\mu_V
	+2dK^2\int_{\R^d}|\nabla V|^{2(\alpha-1)}g^2\,\mathrm{d}\mu_V.
	\]
	If $2(\alpha-1)\leq 1$, then $|\nabla V|^{2(\alpha-1)}\leq1+|\nabla V|$, and therefore
	\[
	2dK^2\int_{\R^d}|\nabla V|^{2(\alpha-1)}g^2\,\mathrm{d}\mu_V
	\leq 2dK^2\int_{\R^d}g^2\,\mathrm{d}\mu_V
	+2d^2K^4 \int_{\R^d}g^2\,\mathrm{d}\mu_V
	+\frac12 \int_{\R^d}|\nabla V|^2g^2\,\mathrm{d}\mu_V,
	\]
	hence
	\[
	\int_{\R^d}|\nabla V|^2g^2\,\mathrm{d}\mu_V
	\leq 4(\sqrt{d}K+dK^2+d^2K^4)\int_{\R^d}g^2\,\mathrm{d}\mu_V
	+16\int_{\R^d}|\nabla g|^2 \,\mathrm{d}\mu_V,
	\]
	which satisfies the claimed inequality.
	In the remaining case that $2(\alpha-1)\in(1,2)$, we obtain instead
	\[
	2dK^2\int_{\R^d}|\nabla V|^{2(\alpha-1)}g^2\,\mathrm{d}\mu_V
	\leq 2d^2K^4\int_{\R^d}|\nabla V|^{2(2(\alpha-1)-1)}g^2\,\mathrm{d}\mu_V
	+\frac12 \int_{\R^d}|\nabla V|^2g^2\,\mathrm{d}\mu_V.
	\]
	Repeating this procedure iteratively provides us with estimates of the form
	\[
	\begin{aligned}
	\int_{\R^d}|\nabla V|^2g^2\,\mathrm{d}\mu_V
	&\leq 2^{m+2}\int_{\R^d}|\nabla g|^2 \,\mathrm{d}\mu_V
	+2^{m}\sqrt{d}K\int_{\R^d}g^2\,\mathrm{d}\mu_V\\
	&\qquad	+(2dK^2)^{2^{(m-1)}}\int_{\R^d}|\nabla V|^{\gamma_m(\alpha)}g^2\,\mathrm{d}\mu_V,
	\end{aligned}
	\]
	where $\gamma_m(\alpha)=2^m(\alpha-2)+2$.
	Since there is some $\eps>0$ such that $\alpha=2-\eps$,
	we can write $\gamma_m(\alpha)=2-2^m\eps$, so there is some $m_0\in\N$ such that $0<\gamma_{m_0}(\alpha)\leq 1$. Then the claim follows as above for the case $2(\alpha-1)\leq 1$.
\end{proof}

\begin{remark}\label{rem:psi-lp}
	The conditions \nameref{ass:y-potential-1}--\nameref{ass:y-potential-3} together imply that $\Psi\in C^1(\R^{d_2})$ and that $\nabla\Psi$ is locally Lipschitz-continuous. This can be proven using Sobolev embedding and interpolation theory, see \cite[Remark 2.4(ii)]{GN20} or \cite[Theorem A6.2]{Conrad2011}. Therefore, we can apply \Cref{thm:arbitrary-integrability} with $g\equiv 1$ to obtain $|\nabla\Psi|\in L^p(\mu_2)$ for all $1\leq p<\infty$. Due to \nameref{ass:y-potential-3}, this also implies $|\nabla^2\Psi|\in L^p(\mu_2)$ for all $1\leq p<\infty$, where $\nabla^2\Psi$ denotes the Hessian matrix.
\end{remark}

\subsection{Essential m-dissipativity for Lipschitz coefficients and potential}

We begin with an essential self-adjointness result for the symmetric part $(S,\core)$.
Since none of the coefficients depend on the coordinate $x$, we first reduce it to an operator on $L^2(\R^{d_2};\mu_2)$.
\begin{lem}\label{lem:ess-self-adjoint-y}
	The differential operator $(\tilde{S},C_c^\infty(\R^{d_2}))$, where
	\[
	\tilde{S}f=\sum_{i,j=1}^d a_{ij}\partial_j\partial_if + \sum_{i=1}^d \sum_{j=1}^d(\partial_j a_{ij}-a_{ij}\partial_j\Psi)\partial_if   \quad\text{ for }f\in C_c^\infty(\R^{d_2}),
	\]
	is essentially self-adjoint on $L^2(\R^{d_2},\mu_2)$.
\end{lem}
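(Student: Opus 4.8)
\emph{Proof strategy.} The plan is to verify the range criterion for essential self-adjointness. A direct computation rewrites $\tilde S$ in divergence form,
\[
\tilde S f=\mathrm{e}^{\Psi}\operatorname{div}\!\bigl(\mathrm{e}^{-\Psi}\Sigma\nabla f\bigr),\qquad f\in C_c^\infty(\R^{d_2}),
\]
which is meaningful because, by \Cref{rem:psi-lp}, $\Psi\in C^1(\R^{d_2})$ with $\nabla\Psi$ locally Lipschitz, while the $a_{ij}$ are locally Lipschitz by \nameref{ass:coeff-derivatives}. Using \nameref{ass:y-potential-2} one checks that $\tilde S$ maps $C_c^\infty(\R^{d_2})$ into $L^2(\mu_2)$, and integrating by parts gives the Dirichlet-form identity $(\tilde S f,g)_{L^2(\mu_2)}=-\int_{\R^{d_2}}\langle\Sigma\nabla f,\nabla g\rangle\,\mathrm{d}\mu_2$ for $f,g\in C_c^\infty(\R^{d_2})$. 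In particular $(\tilde S,C_c^\infty(\R^{d_2}))$ is densely defined, symmetric and non-positive, hence closable and dissipative, and its essential self-adjointness is equivalent to $\operatorname{Range}(1-\tilde S)$ being dense in $L^2(\mu_2)$.

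So I would take $u\in L^2(\mu_2)$ with $(u,(1-\tilde S)f)_{L^2(\mu_2)}=0$ for all $f\in C_c^\infty(\R^{d_2})$ and show $u=0$ (treating real and imaginary parts separately, $u$ may be assumed real). Since $\Psi$ is locally bounded, $\mathrm{e}^{-\Psi}$ is locally bounded and locally bounded away from $0$, so $u\in L^2_{\mathrm{loc}}(\R^{d_2})$ and $\mathrm{e}^{-\Psi}\Sigma$ is locally uniformly elliptic with locally Lipschitz entries. Unwinding the orthogonality relation through the divergence form shows that $u$ is a distributional solution of $\operatorname{div}(\mathrm{e}^{-\Psi}\Sigma\nabla u)=\mathrm{e}^{-\Psi}u$ on $\R^{d_2}$, with right-hand side in $L^2_{\mathrm{loc}}$. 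Interior elliptic regularity for divergence-form operators with such coefficients — e.g.\ via Friedrichs mollification and Caccioppoli estimates — then yields $u\in H^{1,2}_{\mathrm{loc}}(\R^{d_2})$, so the equation takes the weak form $\int_{\R^{d_2}}\bigl(uf+\langle\Sigma\nabla u,\nabla f\rangle\bigr)\,\mathrm{d}\mu_2=0$, which by a density argument remains valid for all $f\in H^{1,2}(\R^{d_2})$ with compact support.

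Next I would run a Caccioppoli cut-off argument. Pick $\chi_n\in C_c^\infty(\R^{d_2})$ with $0\le\chi_n\le 1$, $\chi_n\equiv 1$ on $B_n(0)$, $\supp\chi_n\subset B_{2n}(0)$ and $|\nabla\chi_n|\le C/n$, and test with $f=u\chi_n^2$, which lies in $H^{1,2}(\R^{d_2})$ with compact support since $u\in H^{1,2}_{\mathrm{loc}}$. Using $\nabla(u\chi_n^2)=\chi_n^2\nabla u+2u\chi_n\nabla\chi_n$ this becomes
\[
\int u^2\chi_n^2\,\mathrm{d}\mu_2+\int\chi_n^2\langle\Sigma\nabla u,\nabla u\rangle\,\mathrm{d}\mu_2=-2\int u\chi_n\langle\Sigma\nabla u,\nabla\chi_n\rangle\,\mathrm{d}\mu_2 .
\]
With $\langle\Sigma v,w\rangle\le\langle\Sigma v,v\rangle^{1/2}\langle\Sigma w,w\rangle^{1/2}$, Cauchy–Schwarz, Young's inequality, the bound $\langle\Sigma\nabla\chi_n,\nabla\chi_n\rangle\le d_2M_\Sigma|\nabla\chi_n|^2\le d_2M_\Sigma C^2/n^2$ (with $M_\Sigma$ as in \Cref{def:coeff-const}) and $\langle\Sigma\nabla u,\nabla u\rangle\ge 0$ by \nameref{ass:ellipticity}, the right-hand side is absorbed into the left up to a term $\le 2d_2M_\Sigma C^2n^{-2}\|u\|_{L^2(\mu_2)}^2\to 0$. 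Hence $\int u^2\chi_n^2\,\mathrm{d}\mu_2\to 0$, while monotone convergence gives $\int u^2\chi_n^2\,\mathrm{d}\mu_2\uparrow\|u\|_{L^2(\mu_2)}^2$; so $u=0$, proving density of $\operatorname{Range}(1-\tilde S)$ and therefore the claim.

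The main obstacle is the regularity step: a priori $u$ is only an $L^2_{\mathrm{loc}}$ distributional solution, so the test function $u\chi_n^2$ is inadmissible until $u$ is upgraded to $H^{1,2}_{\mathrm{loc}}$, and this is precisely where the locally uniformly elliptic, locally Lipschitz structure of $\mathrm{e}^{-\Psi}\Sigma$ enters. Everything afterwards is routine; it is worth noting that the cut-off estimate itself relies only on \nameref{ass:ellipticity} and boundedness of the $a_{ij}$, not on the growth conditions \nameref{ass:y-potential-3} or \nameref{ass:coeff-growth}.
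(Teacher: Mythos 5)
Your proof takes a genuinely different route than the paper's. The paper disposes of this lemma in one sentence by invoking \cite[Theorem~4.5]{BG21} with $A=\Sigma$ and $\rho=\mathrm{e}^{-\Psi}$ and merely checking its hypotheses. You instead unpack a self-contained argument: divergence-form rewriting, range criterion, elliptic regularity to upgrade the orthogonal element $u$ from $L^2_{\mathrm{loc}}$ to $H^{1,2}_{\mathrm{loc}}$, and then a Caccioppoli cut-off to conclude $u=0$. This is the classical strategy for essential self-adjointness of symmetric diffusion operators (Wielens, Bogachev--Krylov--Röckner, Stannat) and is essentially what the cited theorem itself rests on, so the two proofs buy the same result at the same price, just with the labor distributed differently: the paper outsources, you internalize. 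Your divergence-form identity, the derivation of the distributional equation, the extension of the weak formulation to compactly supported $H^{1,2}$ test functions, and the cut-off computation are all correct (modulo harmless constants).

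The one place where I would push back is the regularity step, which you rightly flag as the ``main obstacle'' but whose sketch does not quite close the gap. Caccioppoli estimates are a priori estimates that presuppose $u\in H^{1,2}_{\mathrm{loc}}$, so they cannot by themselves deliver that membership; and the Friedrichs mollification route is delicate here because the relevant commutator is second order: with $B\defeq\mathrm{e}^{-\Psi}\Sigma$ only Lipschitz, the difference $\operatorname{div}(B\nabla(u*\rho_\eps))-(\operatorname{div}(B\nabla u))*\rho_\eps$ involves $|B(x)-B(y)|\lesssim\eps$ paired against $\nabla^2\rho_\eps\sim\eps^{-2}$, which does not vanish as $\eps\to0$ without further structure. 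The statement you need — that an $L^2_{\mathrm{loc}}$ distributional solution of $\operatorname{div}(B\nabla u)=g$ with $g\in L^2_{\mathrm{loc}}$ and $B$ locally Lipschitz, locally uniformly elliptic lies in $H^{1,2}_{\mathrm{loc}}$ — is true, but it is a nontrivial Weyl-lemma-type result and should be proved via a parametrix/transposition argument or cited explicitly (for instance from the Bogachev--Krylov--Röckner regularity theory, which is ultimately what \cite{BG21} draws on). One small further point: the local Lipschitz continuity of $\nabla\Psi$ that you use via \Cref{rem:psi-lp} already requires \nameref{ass:y-potential-3}, so the regularity step is not quite independent of the growth conditions even though the cut-off estimate itself is.
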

\begin{proof}
	We apply \cite[Theorem 4.5]{BG21} with $A=\Sigma$ and $\rho=\mathrm{e}^{-\Psi}$ to show essential self-adjointness. For this we need $\mathrm{e}^{\Psi}$ to be locally bounded, which is satisfied due to \nameref{ass:y-potential-1}. The conditions on $A$ are satisfied due to \nameref{ass:ellipticity}--\nameref{ass:coeff-derivatives}, and sufficient regularity of $\rho$ follows from \Cref{rem:psi-lp}.
\end{proof}

\begin{corollary}
	The symmetric operator $(S,\core)$ as defined in \Cref{def:operators} is essentially self-adjoint on $X=L^2(E;\mu)$.
\end{corollary}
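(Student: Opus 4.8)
The plan is to exploit the tensor-product structure. Since none of the coefficients of $S$ depend on the coordinate $x$, we may view $X = L^2(\R^{d_1},\mu_1)\otimes L^2(\R^{d_2},\mu_2)$, and on the algebraic tensor product $\core = C_c^\infty(\R^{d_1})\otimes C_c^\infty(\R^{d_2})$ the operator $S$ acts as $\mathrm{id}\otimes\tilde S$, where $(\tilde S, C_c^\infty(\R^{d_2}))$ is the operator from \Cref{lem:ess-self-adjoint-y}, which is essentially self-adjoint on $L^2(\mu_2)$. Since $(S,\core)$ is symmetric and non-positive (see \Cref{def:operators}), it is essentially self-adjoint as soon as $\mathrm{Ran}(1-S)$ is dense in $X$, i.e.~as soon as every $g\in X$ with $((1-S)f,g)_X = 0$ for all $f\in\core$ vanishes. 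So I would fix such a $g$ and proceed to show $g=0$.

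First I would test against rank-one elements $f = \phi\otimes\psi$ with $\phi\in C_c^\infty(\R^{d_1})$, $\psi\in C_c^\infty(\R^{d_2})$, for which $(1-S)f = \phi\otimes\big[(1-\tilde S)\psi\big]$; these span $\core$, so orthogonality against all of them is equivalent to the original condition. Setting $h_\psi(x)\defeq\int_{\R^{d_2}}[(1-\tilde S)\psi](y)\,g(x,y)\,\mu_2(\mathrm{d}y)$, two applications of the Cauchy--Schwarz inequality (in $y$, then in $x$, using that $\mu_1$ is a probability measure) show $h_\psi\in L^1(\mu_1)$, and Fubini together with the orthogonality relation gives $\int_{\R^{d_1}}\phi\,h_\psi\,\mathrm{d}\mu_1 = 0$ for every $\phi\in C_c^\infty(\R^{d_1})$, hence $h_\psi = 0$ $\mu_1$-a.e. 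In other words, for each fixed $\psi\in C_c^\infty(\R^{d_2})$ there is a $\mu_1$-null set outside of which the slice $g(x,\cdot)\in L^2(\mu_2)$ (well defined for $\mu_1$-a.e.\ $x$ by Fubini) is orthogonal to $(1-\tilde S)\psi$.

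To upgrade this to a statement valid simultaneously for all $\psi$, I would use separability of $L^2(\mu_2)$: by essential self-adjointness of $\tilde S$ the set $(1-\tilde S)\big(C_c^\infty(\R^{d_2})\big)$ is dense in $L^2(\mu_2)$, so one can pick a countable family $(\psi_k)_{k\in\N}\subset C_c^\infty(\R^{d_2})$ with $\{(1-\tilde S)\psi_k : k\in\N\}$ dense. Taking the union of the associated null sets yields a $\mu_1$-null set $N$ such that for every $x\notin N$ the slice $g(x,\cdot)$ is orthogonal to a dense subset of $L^2(\mu_2)$, hence $g(x,\cdot)=0$ in $L^2(\mu_2)$; Fubini then forces $g=0$ in $X$. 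This proves density of $\mathrm{Ran}(1-S)$ and therefore essential self-adjointness of $(S,\core)$. The only genuinely delicate point is this measure-theoretic passage from ``for every $\psi$, $\mu_1$-a.e.\ $x$'' to ``$\mu_1$-a.e.\ $x$, for every $\psi$'', which is exactly what the countable dense selection resolves; everything else is routine. As an alternative one could instead invoke a general theorem on essential self-adjointness of operators of the form $\mathrm{id}\otimes B$ on a Hilbert-space tensor product, in the spirit of the classical results on $A\otimes\mathrm{id}+\mathrm{id}\otimes B$.
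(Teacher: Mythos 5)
Your proof is correct, but it takes a genuinely different route from the paper's. The paper argues \emph{primally}: given a pure tensor $g=g_1\otimes g_2\in\core$, it picks $\tilde f_n\in C_c^\infty(\R^{d_2})$ with $(I-\tilde S)\tilde f_n\to g_2$ in $L^2(\mu_2)$ (using \Cref{lem:ess-self-adjoint-y}), sets $f_n\defeq g_1\otimes\tilde f_n$, and observes that the tensor-product norm factors,
\[
\|(I-S)f_n-g\|_X=\|g_1\|_{L^2(\mu_1)}\,\|(I-\tilde S)\tilde f_n-g_2\|_{L^2(\mu_2)}\to 0.
\]
Linear combinations then show $(I-S)\core$ is dense in the dense subspace $\core$, hence in $X$, and essential m-dissipativity of the symmetric dissipative operator follows. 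You argue \emph{dually}: you fix $g\perp(I-S)\core$, slice it, use Fubini plus Cauchy--Schwarz to get $h_\psi\in L^1(\mu_1)$ and $h_\psi=0$ $\mu_1$-a.e.\ for each fixed $\psi$, and then invoke separability of $L^2(\mu_2)$ and essential self-adjointness of $\tilde S$ to select a countable dense family and pass to ``$\mu_1$-a.e.\ $x$, for all $\psi$''. You correctly flag the quantifier exchange as the one delicate point and resolve it properly. The two approaches buy different things: the paper's approximation argument is shorter and sidesteps all measure-theoretic bookkeeping because the tensor norm factorization does the work immediately; your duality argument is slightly longer but is the standard robust technique when one cannot exhibit approximants explicitly, and your closing remark about invoking a general $\mathrm{id}\otimes B$ theorem on Hilbert tensor products is in fact the cleanest abstraction of what the paper is doing by hand.
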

\begin{proof}
	let $g=g_1\otimes g_2\in \core$ be a pure tensor.
	Then, due to \Cref{lem:ess-self-adjoint-y}, there is a sequence $(\tilde{f}_n)_{n\in\N}$ in $C_c^\infty(\R^{d_2})$ such that $(I-\tilde{S})\tilde{f}_n\to g_2$ in $L^2(\R^{d_2},\mu_2)$ as $n\to\infty$. Define $f_n \in \core$ for each $n\in\N$ as $f_n\defeq g_1\otimes \tilde{f}_n$.
	Then
	\[
		\|(I-S)f_n-g\|_X
		= \| g_1\otimes ((I-\tilde{S})\tilde{f}_n - g_2)\|_X
		= \|g_1\|_{L^2(\mu_1)}\cdot \|(I-\tilde{S})\tilde{f}_n-g_2 \|_{L^2(\mu_2)},
	\]
	which converges to zero as $n\to\infty$.
	By taking linear combinations, this shows that $(I-S)(\core)$ is dense in $X$, since $\core$ is dense in $X$. Therefore, the dissipative operator $(S,\core)$ is essentially self-adjoint.
\end{proof}

Since $S$ is dissipative on $D_0\defeq L_c^2(\mu_1)\otimes C_c^\infty(\R^{d_2})\supset \mathcal{C}$,
the operator $(S,D_0)$ is essentially m-dissipative as well. We introduce the unitary transformations
\begin{align}\label{eq:unitary-transforms}
	U:H &\to L^2(E,\mathrm{d}(x,y)), && f\mapsto \sqrt{Z(\Psi)^{-1}}\,\mathrm{e}^{-\frac12(\Phi+\Psi)} \qquad\text{ and }\\
	U_\Psi: L^2(\mu_2) &\to L^2(\R^{d_2},\mathrm{d}y), && f\mapsto \sqrt{Z(\Psi)^{-1}}\,\mathrm{e}^{-\frac12\Psi}
\end{align}
as well as the subspace $D_1\defeq L_c^2(\R^{d_1},\mathrm{d}x)\otimes U_\Psi C_c^\infty(\R^{d_2})$ of $L^2(E,\mathrm{d}(x,y))$.
Note that due to \nameref{ass:x-potential-1}, $\mathrm{e}^{-\Phi}$ is strictly positive and locally bounded, which implies that $L_c^2(\R^{d_1},\mathrm{d}x)$ and $L_c^2(\mu_1)$ coincide. Hence we obtain $D_1=UD_0$ and essential m-dissipativity of $(L_0,D_1)$, where $L_0=USU^{-1}$. For $f\in D_1$, we obtain the representation
\[
	L_0f	= \sum_{i,j=1}^d a_{ij}\partial_{y_j}\partial_{y_i}f
			- \frac14(\nabla\Psi,\Sigma \nabla\Psi)f
			+ \frac12 \sum_{i,j=1}^{d_2} a_{ij}\partial_j\partial_i\Psi f
			+ \sum_{i,j=1}^d \partial_j a_{ij} (\tfrac12\partial_i\Psi f+\partial_{y_i}f), 
\]
where the differential operators $\nabla$ and $\partial_i$ are understood in the distributional sense.

Next we perturb $(L_0,D_1)$ by the multiplication operator $(A_1,D_1)$ given by the measurable function $\cu Q\nabla\Psi\cdot x:E\to\C$.
Clearly $(A_1,D_1)$ is well-defined in $L^2(E,\mathrm{d}(x,y))$ and antisymmetric, hence dissipative.
\begin{prop}\label{prop:no-potential}
	The operator $(L_1,D_1)$ defined by $L_1\defeq L_0+A_1$ is essentially m-dissipative on $ L^2(E,\mathrm{d}(x,y))$.
\end{prop}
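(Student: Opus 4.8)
The plan is to obtain essential m-dissipativity of $(L_1,D_1)$ as a bounded-type perturbation of the essentially m-dissipative operator $(L_0,D_1)$, using a standard perturbation argument for dissipative operators (in the spirit of the Lumer–Phillips setup): if $(L_0,D_1)$ is essentially m-dissipative, $(A_1,D_1)$ is dissipative, and $A_1$ is ``small'' relative to $L_0$ in an appropriate sense, then $L_0+A_1$ is essentially m-dissipative. Since $A_1$ here is the multiplication operator by $\cu\, Q\nabla\Psi(y)\cdot x$, which is \emph{not} globally bounded, the point is to control it by the dissipative symmetric part $L_0$ rather than treat it as a genuinely bounded perturbation.

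First I would write out, for $f\in D_1$, the quadratic form identity $\mathrm{Re}(-L_0f,f)_{L^2(\mathrm dx\,\mathrm dy)}$ coming from the representation of $L_0$ displayed just above the proposition (or equivalently by pulling back through the unitary $U$ to $(-Sf,f)_{L^2(\mu)}$, which by \eqref{eq:gradient-form} equals $\int_E \langle\Sigma\nabla_y f,\nabla_y f\rangle\,\mathrm d\mu \geq c_\Sigma^{-1}\int_E|\nabla_y f|^2\,\mathrm d\mu$ after undoing the transform). This shows $-L_0$ dominates a weighted Dirichlet form in the $y$-variable together with the potential term $\tfrac14\langle\nabla\Psi,\Sigma\nabla\Psi\rangle$. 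The key estimate is then to bound $\|A_1 f\|^2 = \int_E |Q\nabla\Psi(y)|^2|x|^2|f|^2\,\mathrm d(x,y)$ in terms of $(-L_0 f,f)$ and $\|f\|^2$: the factor $|Q\nabla\Psi(y)|^2$ is absorbed by (a fraction of) the potential part $\tfrac14\langle\nabla\Psi,\Sigma\nabla\Psi\rangle\gtrsim c_\Sigma^{-1}|\nabla\Psi|^2$ sitting inside $-L_0$, while the unbounded factor $|x|^2$ is handled by noting that on the relevant subspace the $x$-variable lives in $L_c^2(\R^{d_1},\mathrm dx)$, so $|x|$ is bounded on the support — or, more robustly, by first reducing to pure tensors $f = g_1\otimes g_2$ (as in the preceding Corollary) so that $\|A_1 f\|^2 = \big(\int|x|^2|g_1|^2\,\mathrm dx\big)\big(\int|Q\nabla\Psi|^2|g_2|^2\,\mathrm dy\big)$ factorizes and each factor is finite and controllable. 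With such a relative bound in hand, a Neumann-series / range argument shows $(\lambda - L_0 - A_1)(D_1)$ is dense for $\lambda$ large, giving essential m-dissipativity.

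In more detail, the steps are: (1) record that $(L_0,D_1)$ is essentially m-dissipative and that $(A_1,D_1)$ is antisymmetric hence dissipative, so $(L_1,D_1)$ is dissipative; (2) establish the relative-smallness inequality $\|A_1 f\| \le \varepsilon\,\|L_0 f\| + C_\varepsilon\|f\|$ for all $\varepsilon>0$ and $f$ in (a core consisting of) pure tensors in $D_1$, using \nameref{ass:ellipticity} to extract $c_\Sigma^{-1}|\nabla\Psi|^2$ from the potential term of $L_0$, using that $|\nabla\Psi|\in L^p(\mu_2)$ for all $p$ (\Cref{rem:psi-lp}, equivalently \Cref{thm:arbitrary-integrability}) to close the $y$-integral, and using compact support / $L_c^2$ in $x$ to dominate $|x|$; (3) deduce that for $\lambda$ sufficiently large the operator $\lambda - L_1$ has dense range on $D_1$ by the standard perturbation lemma for essentially m-dissipative operators (write $\lambda - L_1 = (\lambda-L_0)\big(I - (\lambda-L_0)^{-1}A_1\big)$ after passing to the closure of $L_0$, and invert the Neumann series once the operator norm is $<1$); (4) conclude essential m-dissipativity of $(L_1,D_1)$ on $L^2(E,\mathrm d(x,y))$.

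The main obstacle is step (2): the perturbation $A_1$ is genuinely unbounded in both variables, so a naive bounded-perturbation theorem does not apply, and one must carefully exploit the structure — the Gaussian-type (more precisely, $\Psi$-controlled) decay encoded in $-L_0$ through its potential term to beat the $|\nabla\Psi(y)|$ growth, together with the localization in $x$. Getting a \emph{relative bound with arbitrarily small prefactor} (rather than just a relative bound with some constant) is what makes the Neumann-series inversion legitimate, and verifying this requires the full strength of the integrability consequences of \nameref{ass:y-potential-3} recorded in \Cref{rem:psi-lp}; one should also be slightly careful that $D_1$ is a core so that it suffices to verify the estimate on pure tensors and extend by linearity and density.
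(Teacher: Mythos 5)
Your identification of the key estimate in the $y$-variable is correct and essentially matches the paper: one uses the ellipticity \nameref{ass:ellipticity} to absorb $|Q\nabla\Psi(y)|^2$ into the potential term $\tfrac14\langle\nabla\Psi,\Sigma\nabla\Psi\rangle$ sitting inside $-L_0$, together with \nameref{ass:y-potential-3} (via H\"older/Young) to handle the lower-order terms, and this yields a uniform bound of the form $\int_E|\nabla\Psi|^2f^2\,\mathrm{d}(x,y)\le a\,(-L_0f,f)+b\|f\|^2$ over all of $D_1$. That part of your plan is sound.

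The gap is in your treatment of the $|x|$ factor. The inequality you want in step~(2), namely $\|A_1f\|\le\eps\|L_0f\|+C_\eps\|f\|$ with constants independent of $f\in D_1$, is in fact \emph{false}. Since $L_0$ involves no differentiation in $x$, translating a fixed $f$ by $x_0$ in the $x$-variable leaves $\|f\|$ and $\|L_0f\|$ unchanged while $\|A_1f\|$ grows like $|x_0|$; hence no uniform relative bound (of any size, let alone arbitrarily small) exists on all of $D_1$. Your appeals to ``$|x|$ is bounded on the support'' and to pure-tensor factorization do not repair this: the first gives a bound depending on the individual $f$, not a constant, and the second merely shows $A_1f\in L^2$ but again with an $x$-factor $\int|x|^2|g_1|^2\,\mathrm{d}x$ that cannot be compared to $\|g_1\|_{L^2}^2$ uniformly. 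Consequently the Neumann-series argument of step~(3) cannot be launched.

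The paper's fix is precisely the step you are missing: it introduces the complete orthogonal family of projections $P_nf=\mathds{1}_{[n-1,n)}(|x|)\,f$, which commute with both $L_0$ and $A_1$ and leave $D_1$ invariant. On each invariant subspace $P_nD_1$ one has the \emph{uniform} bound $|x|<n$, so the estimate above yields a Kato bound with relative bound zero for $P_nA_1$ with respect to $P_nL_0$, giving essential m-dissipativity of $L_1$ restricted to $P_nD_1$ on $P_nL^2$; essential m-dissipativity on the full space then follows by reassembling over the orthogonal decomposition $L^2=\bigoplus_nP_nL^2$. Without some such localization in $x$ (or an equivalent device), the perturbation argument does not go through.
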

\begin{proof}
We introduce the complete orthogonal family of projections $P_n$ defined via $P_nf\defeq \xi_nf$,
where $\xi_n$ is given by $\xi_n(x,y) = \mathds{1}_{[n-1,n)}(|x|)$.
Each $P_n$ leaves $D_1$ invariant and commutes with both $L_0$ and $A_1$.
We have to show that each $P_nA_1$ is $P_nL_0$-bounded with Kato bound zero. Let $f\in P_nD_1$. Then it holds that
\[
	\|\cu Q\nabla\Psi\cdot xf\|_{L^2}^2\leq n^2|Q|_2^2\int_E |\nabla\Psi|^2f^2\,\mathrm{d}(x,y).
\]
Hence, it is enough to show that there are finite constants $a,b$ such that
\begin{equation}\label{eq:Kato-bound}
	\int_E |\nabla\Psi|^2f^2\,\mathrm{d}(x,y)\leq a(L_0f,f)+b\|f\|_{L^2}\quad\text{ for all }f\in D_1.
\end{equation}
We get
\[
\begin{aligned}
	\int_E |\nabla\Psi|^2f^2\,\mathrm{d}(x,y) &\leq 4c_\Sigma \int_E \frac14\langle \nabla\Psi,\Sigma \nabla\Psi\rangle f^2\,\mathrm{d}(x,y) \\
	&\leq 4c_\Sigma \left( \int_E \frac14\langle \nabla\Psi,\Sigma \nabla\Psi\rangle f^2\,\mathrm{d}(x,y)
			+ \int_E \langle \nabla_y f,\Sigma \nabla_yf\rangle\,\mathrm{d}(x,y)  \right) \\
	&= 4c_\Sigma \left( (-L_0f,f)_{L^2}+\int_E\frac12 \sum_{i,j=1}^{d_2} (a_{ij}\partial_j\partial_i\Psi + \partial_j a_{ij} \partial_i\Psi) f^2\,\mathrm{d}(x,y) \right)
\end{aligned}
\]
Let $R_1\defeq 4c_\Sigma$ and recall that due to \nameref{ass:coeff-growth} with $\beta=0$, it holds that $|\partial_j a_{ij}|\leq 2M$. Using the Hölder and Young inequalities for $p=q=2$, it follows that
\[
\begin{aligned}
	\frac{R_1}2\left|\int_E \sum_{i,j=1}^{d_2} \partial_j a_{ij}\partial_i\Psi f^2\,\mathrm{d}(x,y)\right|
	&\leq \frac{R_1}2\sum_{i=1}^{d_2}\left\| \left(\sum_{j=1}^{d_2}\partial_j a_{ij} \right)fR_1^{1/2} \right\|_{L^2} \cdot \|R_1^{-1/2}\partial_i\Psi f\|_{L^2} \\
	&\leq \frac14\sum_{i=1}^{d_2}\int_E \left(R_1^2\Biggl(\sum_{j=1}^{d_2}\partial_j a_{ij}\Biggr)^2+(\partial_i\Psi)^2 \right)  f^2\,\mathrm{d}(x,y) \\
	&\leq 16c_\Sigma^2M^2d_2^3\|f\|_{L^2}^2+\frac14 \int_E |\nabla\Psi|^2f^2\,\mathrm{d}(x,y).
\end{aligned}
\]

Now recall \nameref{ass:y-potential-3} and set $R_2\defeq 8c_\Sigma M_\Sigma K$; then again with Hölder and Young, but for $p=\frac2{\alpha}$, $q=\frac{2}{2-\alpha}$, we get
\[
\begin{aligned}
	\frac{R_1}2\left|\int_E \sum_{i,j=1}^{d_2} a_{ij}\partial_j\partial_i\Psi f^2\,\mathrm{d}(x,y)\right|
	&\leq \frac{R_1}2\int_E |\Sigma|_2 \cdot|\nabla^2\Psi|_2f^2\,\mathrm{d}(x,y) \\
	&\leq 2c_\Sigma M_\Sigma K \int_E (1+|\nabla\Psi|^\alpha) R_2^{-\frac{\alpha}2} f^\alpha R_2^{\frac{\alpha}2}f^{2-\alpha}\,\mathrm{d}(x,y) \\
	&\leq \frac{R_2}4\left( \|f\|_{L^2}^2+ \| |\nabla\Psi|^\alpha R_2^{-\frac{\alpha}2} f^\alpha  \|_{L^{\frac{2}{\alpha}}}  \cdot  \| R_2^{\frac{\alpha}2}f^{2-\alpha}  \|_{L^{\frac{2}{2-\alpha}}}  \right) \\
	&\leq \frac{R_2}4 \|f\|_{L^2}^2+ \frac{\alpha R_2}{8 R_2}\| |\nabla\Psi|f\|_{L^2}^2+\frac{(2-\alpha)R_2^{\frac{2}{2-\alpha}}}{8}\|f\|_{L^2}^2 \\
	&\leq \left(\frac{R_2}4+\frac{R_2^{\frac{2}{2-\alpha}}}8 \right)\|f\|_{L^2}^2 +\frac14 \int_E |\nabla\Psi|^2f^2\,\mathrm{d}(x,y).
\end{aligned}
\]
Combining these three inequalities yields \eqref{eq:Kato-bound} with
\[
	a=-8c_\Sigma \quad\text{ and }\quad
	b=2R_1^2M^2d_2^3+\frac{R_2}2+\frac{R_2^{\frac{2}{2-\alpha}}}4.
\]
\end{proof}

Since $C_c^\infty(\R^{d_1})\otimes U_\Psi C_c^\infty(\R^{d_2})$ is dense in $D_1$ wrt.~the graph norm of $L_1$,
we obtain essential m-dissipativity of $(L_1,C_c^\infty(\R^{d_1})\otimes U_\Psi C_c^\infty(\R^{d_2}))$ and therefore also of its dissipative extension $(L_1,D_2)$ with $D_2\defeq \mathcal{S}(\R^{d_1})\otimes U_\Psi C_c^\infty(\R^{d_2}))$,
where $\mathcal{S}(\R^{d_1})$ denotes the set of smooth functions of rapid decrease on $\R^{d_1}$.
Applying Fourier transform in the $x$-component leaves $D_2$ invariant and shows that
$(L_2,D_2)$ is essentially m-dissipative, where $L_2= L_0+Q\nabla\Psi\cdot\nabla_x$.
Now we add the part depending on the potential $\Phi$.

\begin{prop}
	Let $\Sigma$ satisfy \nameref{ass:coeff-growth} with $\beta=0$ and let $\Phi$ be Lipschitz-continuous.
	Then the operator $(L',D_2)$ with $L'=L_2-Q^*\nabla\Phi\nabla_y$ is essentially m-dissipative on $L^2(E,\mathrm{d}(x,y))$.
\end{prop}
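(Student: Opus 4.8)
The plan is to exhibit $(L',D_2)$ as a perturbation of the essentially m-dissipative operator $(L_2,D_2)$ by the first-order operator $B\defeq -Q^*\nabla\Phi\cdot\nabla_y$, so that $L'=L_2+B$ on $D_2$. The point that makes this step genuinely easier than \Cref{prop:no-potential} is that $\Phi$ is now assumed Lipschitz, hence $\nabla\Phi$ is bounded; consequently no localization in the $x$-variable (no family of projections $P_n$) is needed, and $B$ can be treated directly as a Kato perturbation of $L_2$. Concretely, I would proceed in three steps: (i) show that $B$ is well defined and antisymmetric on $D_2$, so that $(L',D_2)$ is dissipative and closable; (ii) prove that $B$ is $L_2$-bounded with relative bound zero; (iii) conclude by a standard perturbation argument for m-dissipative operators.

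For step (i), I would first check $Bf\in L^2(E,\mathrm d(x,y))$ for $f\in D_2$: the coefficient $Q^*\nabla\Phi$ is bounded, and for a pure tensor $f=\phi\otimes U_\Psi\vartheta$ with $\phi\in\mathcal S(\R^{d_1})$ and $\vartheta\in C_c^\infty(\R^{d_2})$, the $y$-factor $\nabla_y(U_\Psi\vartheta)$ lies in $L^2(\R^{d_2},\mathrm dy)$ because $\vartheta$ has compact support, $\Psi$ is locally bounded by \nameref{ass:y-potential-1}, and $\nabla\Psi\in L^2_{\mathrm{loc}}(\R^{d_2},\mu_2)$ by \nameref{ass:y-potential-2}. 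Since $Q^*\nabla\Phi(x)$ does not depend on $y$ and the elements of $D_2$ have compact support in $y$, integration by parts in $y$ gives $(Bf,g)_{L^2}=-(f,Bg)_{L^2}$ for all $f,g\in D_2$, so $(B,D_2)$ is antisymmetric, hence dissipative, and therefore $(L',D_2)=(L_2+B,D_2)$ is dissipative and closable.

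For step (ii), uniform ellipticity \nameref{ass:ellipticity} gives $\|Bf\|_{L^2}^2\le|Q^*|_2^2(\operatorname{Lip}\Phi)^2\int_E|\nabla_yf|^2\,\mathrm d(x,y)\le c_\Sigma|Q^*|_2^2(\operatorname{Lip}\Phi)^2\int_E\langle\nabla_yf,\Sigma\nabla_yf\rangle\,\mathrm d(x,y)$, so it suffices to control $\int_E\langle\nabla_yf,\Sigma\nabla_yf\rangle\,\mathrm d(x,y)$ by $\operatorname{Re}(-L_2f,f)_{L^2}$ and $\|f\|_{L^2}^2$. This is exactly the integration-by-parts computation already carried out in the proof of \Cref{prop:no-potential}: rearranging the identity for $\operatorname{Re}(-L_0f,f)_{L^2}$ and absorbing the terms containing $\nabla^2\Psi$ and $\nabla a_{ij}$ via \nameref{ass:y-potential-3}, \nameref{ass:coeff-growth} with $\beta=0$, Hölder's and Young's inequalities, and strict ellipticity, one obtains a finite constant $C_0$ independent of $f$ with $\int_E\langle\nabla_yf,\Sigma\nabla_yf\rangle\,\mathrm d(x,y)\le\operatorname{Re}(-L_0f,f)_{L^2}+C_0\|f\|_{L^2}^2$. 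Because $L_2-L_0=Q\nabla\Psi\cdot\nabla_x$ is antisymmetric on $L^2(E,\mathrm d(x,y))$, we have $\operatorname{Re}(-L_0f,f)_{L^2}=\operatorname{Re}(-L_2f,f)_{L^2}$, and then $\operatorname{Re}(-L_2f,f)_{L^2}\le\|L_2f\|_{L^2}\|f\|_{L^2}\le\tfrac{\eps}{2}\|L_2f\|_{L^2}^2+\tfrac1{2\eps}\|f\|_{L^2}^2$ for arbitrary $\eps>0$ upgrades this to $\|Bf\|_{L^2}\le a_\eps\|L_2f\|_{L^2}+b_\eps\|f\|_{L^2}$ with $a_\eps\to0$ as $\eps\to0$; in particular the relative bound can be made $<1$. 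Then in step (iii): since $(L_2,D_2)$ is essentially m-dissipative, $B$ is dissipative, and $B$ is $L_2$-bounded with relative bound $<1$, a standard perturbation argument for m-dissipative operators shows that the closure of $(L_2+B,D_2)=(L',D_2)$ is m-dissipative.

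I do not expect a serious obstacle here. The computational core — absorbing the lower-order terms produced by $\nabla^2\Psi$ and $\nabla a_{ij}$ — is already available from the proof of \Cref{prop:no-potential}, and the only genuinely new ingredient is the boundedness of $\nabla\Phi$, which removes the need for the $x$-localization used there. The two places that require a little care are that the form bound is taken with respect to $L_2$ rather than $L_0$ (handled by the antisymmetry of $L_2-L_0$), and that one really does obtain relative bound zero (so that the abstract perturbation theorem applies without further restrictions on the constants).
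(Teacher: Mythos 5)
Your proposal is correct and takes essentially the same approach as the paper: both treat $-Q^*\nabla\Phi\cdot\nabla_y$ as a dissipative Kato perturbation of $(L_2,D_2)$ with relative bound zero, bounding $\|Bf\|^2$ by $\|\nabla\Phi\|_\infty^2\,c_\Sigma(\nabla_yf,\Sigma\nabla_yf)$ via Lipschitz continuity and uniform ellipticity, then controlling the elliptic form by $\operatorname{Re}(-L_2f,f)$ plus $\|f\|^2$ through the same integration-by-parts and error-absorption computation as in \Cref{prop:no-potential}, with antisymmetry of $Q\nabla\Psi\cdot\nabla_x$ handling the transition from $L_0$ to $L_2$. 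The paper states this more compactly by simply writing the single inequality and pointing back to that proof, but the argument you reconstruct is the one intended.
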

\begin{proof}
	It holds due to antisymmetry of $Q\nabla\Psi\nabla_x$ that
	\[
		\|Q^*\nabla\Phi\nabla_yf\|_{L^2}^2 \leq \||Q^*\nabla\Phi|\|_\infty^2c_\Sigma\biggl((\nabla_yf,\Sigma\nabla_y f)_{L^2}+ \Bigl(\frac{\langle\nabla\Psi,\Sigma \nabla\Psi\rangle}4 f -Q\nabla\Psi\nabla_x f,f\Bigr)_{L^2} \biggr),
	\]
	which analogously to the proof of \Cref{prop:no-potential} again implies
	that the antisymmetric, hence dissipative operator $(\nabla\Phi\nabla_y,D_2)$ is $L_2$-bounded with bound zero.
	This shows the claim.
\end{proof}

Denote by $H_c^{1,\infty}(\R^{d_1})$ the space of functions in $H^{1,\infty}(\R^{d_1},\mathrm{d}x)$ with compact support
and set $D'\defeq H_c^{1,\infty}(\R^{d_1})\otimes U_\Psi C_c^\infty(\R^{d_2})$. As $(L',D')$ is dissipative and its closure extends $(L',D_2)$, it is itself essentially m-dissipative. The unitary transformation $U^{-1}$ from the beginning of this section transforms $D'$ into $D_3\defeq H_c^{1,\infty}(\R^{d_1})\otimes C_c^\infty(\R^{d_2})$, and it holds that $U^{-1}L'U = L$ on $D_3$.
Since $|\nabla\Psi|\in L_{\mathrm{loc}}^2(\R^{d_2},\mu_2)$, by the same argument as in \cite[Theorem 4.1]{BG21} we get the following result:

\begin{thm}\label{thm:generator-nice-coeff}
	Let $\Sigma$ satisfy \nameref{ass:coeff-growth} with $\beta=0$ and $\Phi$ be Lipschitz-continuous.
	Then $(L,\mathcal{C})$ is essentially m-dissipative on $H$.
\end{thm}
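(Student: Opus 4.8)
The reductions carried out above already give essential m-dissipativity of $(L,D_3)$ on $H$, where $D_3=H_c^{1,\infty}(\R^{d_1})\otimes C_c^\infty(\R^{d_2})$. Since $\core=C_c^\infty(\R^{d_1})\otimes C_c^\infty(\R^{d_2})\subset D_3$ and $(L,\core)$ is dissipative (hence closable) by \Cref{def:operators}, we have $\overline{(L,\core)}\subseteq\overline{(L,D_3)}$, and the latter is m-dissipative; so it suffices to prove the reverse inclusion, i.e.~that the graph-norm closure of $(L,\core)$ contains $D_3$. By linearity it is enough to approximate an arbitrary pure tensor $g=g_1\otimes g_2$, with $g_1\in H_c^{1,\infty}(\R^{d_1})$ and $g_2\in C_c^\infty(\R^{d_2})$, by elements of $\core$ in the graph norm $f\mapsto\|f\|_X+\|Lf\|_X$.

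For this I would mollify in $x$: fix a standard mollifier and set $g_1^\eps\defeq g_1*\rho_\eps\in C_c^\infty(\R^{d_1})$, supported in a fixed bounded neighbourhood of $\supp g_1$ for $\eps$ small, so that $f_\eps\defeq g_1^\eps\otimes g_2\in\core$. Since $\mathrm{e}^{-\Phi}$ is continuous and strictly positive, $\mu_1$ is locally equivalent to Lebesgue measure; as $g_1,\nabla g_1\in L^\infty$ have equibounded supports, it follows that $g_1^\eps\to g_1$ and $\nabla_x g_1^\eps=(\nabla g_1)*\rho_\eps\to\nabla_x g_1$ in $L^2(\mu_1)$, and hence $f_\eps\to g$ in $X$. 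For the generator term, the product rule in the defining formula of $L$ gives
\[
Lf_\eps=g_1^\eps\bigl(\tr[\Sigma H_y g_2]+\langle b,\nabla_y g_2\rangle\bigr)-g_1^\eps\langle Q^*\nabla\Phi,\nabla_y g_2\rangle+\langle Q\nabla\Psi,\nabla_x g_1^\eps\rangle g_2,
\]
and I would check that each summand converges in $X$ to the corresponding term of $Lg$. Each summand is a finite sum of products $h_1(x)h_2(y)$, where the $x$-factor $h_1$ is either $g_1^\eps$, a component of $\nabla_x g_1^\eps$, or a bounded function times $g_1^\eps$ — the bounded function being a component of $Q^*\nabla\Phi$, which lies in $L^\infty$ precisely because $\Phi$ is Lipschitz — so in every case $h_1$ converges in $L^2(\mu_1)$. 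The $y$-factor $h_2$ is independent of $\eps$ and lies in $L^2(\mu_2)$: $\tr[\Sigma H_y g_2]$ and $\nabla_y g_2$ are bounded with compact support; $b=\sum_j(\partial_j a_{ij}-a_{ij}\partial_j\Psi)$ is square-integrable over $\supp g_2$, since the $\partial_j a_{ij}$ are bounded (here $\beta=0$ enters) and $\partial_j\Psi\in L^2_{\mathrm{loc}}(\mu_2)$ by \nameref{ass:y-potential-2}; and each component of $Q\nabla\Psi$ times $g_2$ is in $L^2(\mu_2)$ again by \nameref{ass:y-potential-2}. Using the factorization $\|h_1^\eps h_2-h_1h_2\|_{L^2(\mu)}=\|h_1^\eps-h_1\|_{L^2(\mu_1)}\,\|h_2\|_{L^2(\mu_2)}$ for each such product then yields $Lf_\eps\to Lg$ in $X$.

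Combining $f_\eps\to g$ and $Lf_\eps\to Lg$ shows that $g$ belongs to the graph-norm closure of $(L,\core)$; by linearity the same holds for finite sums of pure tensors, i.e.~for all of $D_3$, so $\overline{(L,\core)}$ contains $\overline{(L,D_3)}$. Together with the inclusion noted at the start, this gives $\overline{(L,\core)}=\overline{(L,D_3)}$, which is m-dissipative; that is, $(L,\core)$ is essentially m-dissipative on $H$. (This is the argument of \cite[Theorem~4.1]{BG21}.) The one delicate point is the termwise convergence of $Lf_\eps$: one has to ensure that every coefficient of $L$, after restriction to the fixed bounded supports of the approximants, is square-integrable against the relevant measure — which is exactly where the standing hypotheses $\beta=0$, $\Phi$ Lipschitz, and $\partial_j\Psi\in L^2_{\mathrm{loc}}(\mu_2)$ enter; everything else is routine.
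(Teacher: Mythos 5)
Your argument is correct and follows the same route as the paper: you take the essential m-dissipativity of $(L,D_3)$ obtained from the preceding reduction chain and then pass from the core $D_3=H_c^{1,\infty}(\R^{d_1})\otimes C_c^\infty(\R^{d_2})$ to $\core$ by mollification in $x$ with graph-norm convergence, which is exactly the step the paper delegates to the citation of \cite[Theorem~4.1]{BG21} (using, as you do, that $|\nabla\Psi|\in L^2_{\mathrm{loc}}(\mu_2)$ and that the remaining coefficients are locally square-integrable). The only cosmetic remark is that $\beta=0$ and the Lipschitz continuity of $\Phi$ are really consumed in the earlier Kato-perturbation steps rather than in this final approximation, where local boundedness of $\partial_k a_{ij}$ and $\nabla\Phi$ would already suffice.
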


\subsection{Essential m-dissipativity for locally Lipschitz coefficients and potential}

Now we extend the above result to the general statement given in \Cref{thm:ess-m-diss}.
The proof is an adaptation of the procedure employed for \cite[Theorem 3.4]{BG21}.
Without loss of generality assume $\Phi\geq 0$.
For $n\in\N$ we define $\Sigma_n$ via
\[
	\Sigma_n=(a_{ij,n})_{1\leq i,j\leq d}, \quad	a_{ij,n}(y)\defeq a_{ij}\left(\left(\frac{n}{|y|}\wedge 1\right)y\right).
\]
Then each $\Sigma_n$ coincides with $\Sigma$ on $B_n(0)$ and satisfies \nameref{ass:ellipticity} with $c_{\Sigma_n}=c_\Sigma$.
For $y\in B_n(0)$, \nameref{ass:coeff-growth} implies $|\partial_k a_{ij,n}|\leq 2Mn^\beta$ for all $1\leq k\leq d_2$.
For $y\in \R^{d_2}\setminus\overline{B_n(0)}$, the chain rule suggests
\[
	|\partial_ka_{ij,n}|=\left| \partial_ka_{ij}\left(\frac{yn}{|y|}\right)-\sum_{\ell=1}^{d_2}\partial_\ell a_{ij}\left(\frac{yn}{|y|}\right)\frac{ny_k y_\ell}{|y|^3} \right|
	\leq (\sqrt{d_2}+1)Mn^\beta.
\]
Hence, $\Sigma_n$ satisfies \nameref{ass:coeff-growth} with $\beta_n= 0$ and $M_n\defeq (\sqrt{d_2}+1)Mn^\beta$.

Let further $\eta_m\in C_c^\infty(\R^{d_1})$ for each $m\in\N$ with $\eta=1$ on $B_m(0)$ and set $\Phi_m=\eta_m \Phi$, which is Lipschitz-continuous. Then set $\mu_{1,m}\defeq \mathrm{e}^{-\Phi_m}\mathrm{d}x$, $X_m \defeq L^2(E,\mu_{1,m}\otimes\mu_2)$ and define $(L_{n,m},\mathcal{C})$ via
\[
	L_{n,m}f = \sum_{i,j=1}^d a_{ij,n}\partial_{y_j}\partial_{y_i}f + \sum_{i=1}^d \sum_{j=1}^d(\partial_j a_{ij,n}-a_{ij,n}\partial_j\Psi)\partial_{y_i}f + Q\nabla\Psi\cdot\nabla_x f - Q^*\nabla\Phi_m\cdot\nabla_y f. 
\]
Then by \Cref{thm:generator-nice-coeff}, for each $n,m\in\N$, $(L_{n,m},\mathcal{C})$ is essentially m-dissipative on $X_m$,
and it holds that $L_{n,m}f = Lf$ for all $f\in \mathcal{C}$ on $B_m(0)\times B_n(0)$.
Note further that $\|\cdot\|_X\leq \|\cdot\|_{X_m}$.

Now we prove the analogue to \cite[Lemma 4.11]{BG21}:
\begin{lem}\label{lem:technical-estimate}
Let $n,m\in\N$ and $\Sigma_n$, $\Phi_m$ as defined above. Then there is a constant $D_1<\infty$
independent of $n,m$ such that for each $1\leq j\leq d_2$, the following hold for all $f\in \mathcal{C}$:
\begin{align*}
	\|\partial_j\Psi f\|_{X_m} &\leq D_1 n^{\frac{\beta}2} \|(I-L_{n,m})f  \|_{X_m},\\
	\|\partial_{y_j}f\|_{X_m} &\leq D_1 n^{\frac{\beta}2}  \|(I-L_{n,m})f  \|_{X_m}.
\end{align*}
\end{lem}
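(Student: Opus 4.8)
The plan is to derive both inequalities simultaneously by testing the resolvent equation $(I-L_{n,m})f = g$ against suitable functions and exploiting the ellipticity of $\Sigma_n$ together with the $L^p$-integrability of $|\nabla\Psi|$ provided by \Cref{rem:psi-lp} and \Cref{thm:arbitrary-integrability}. First I would compute, for $f\in\core$, the quadratic form $(-L_{n,m}f,f)_{X_m}$ by integration by parts in the $y$-variable against the measure $\mu_{1,m}\otimes\mu_2$; since the first-order terms in $x$ are antisymmetric, they drop out, and one is left with the Dirichlet-type term $\int \langle\nabla_y f,\Sigma_n\nabla_y f\rangle\,\mathrm{d}(\mu_{1,m}\otimes\mu_2)$ plus lower-order contributions coming from the fact that $\Sigma_n$ does not commute with $\mathrm{e}^{-\Psi}$ (i.e.\ terms involving $a_{ij,n}\partial_i\partial_j\Psi$ and $\partial_j a_{ij,n}\partial_i\Psi$), exactly as in the proof of \Cref{prop:no-potential}. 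By \nameref{ass:ellipticity}, $\int \langle\nabla_y f,\Sigma_n\nabla_y f\rangle \geq c_\Sigma^{-1}\|\nabla_y f\|_{X_m}^2$, so controlling these lower-order terms is the key.

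The main step is to absorb the cross terms. Using $|\partial_k a_{ij,n}|\leq (\sqrt{d_2}+1)Mn^\beta =: M_n$ and $|\nabla^2\Psi|\leq K(1+|\nabla\Psi|^\alpha)$ with $1\leq\alpha<2$, I would apply Hölder with exponents $2/\alpha$ and $2/(2-\alpha)$ to the term $\int |\Sigma_n|\,|\nabla^2\Psi|\,f^2$ and Hölder/Young with exponents $2,2$ to the term $\int |\partial a_{ij,n}|\,|\partial_i\Psi|\,f^2$, exactly mirroring the two displayed estimates in \Cref{prop:no-potential}, with the only change that the constant $M$ there is replaced by $M_n = O(n^\beta)$. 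This yields, after absorbing a $\tfrac12\int|\nabla\Psi|^2 f^2$ into the left-hand side,
\[
  \int_E |\nabla\Psi|^2 f^2\,\mathrm{d}(\mu_{1,m}\otimes\mu_2)
  \;\leq\; C\,n^{2\beta}\Bigl((-L_{n,m}f,f)_{X_m} + \|f\|_{X_m}^2\Bigr),
\]
for a constant $C$ independent of $n,m$ (here I used $\|f\|_X\leq\|f\|_{X_m}$ and that $M_n^2 = O(n^{2\beta})$ dominates the $M_n^{2/(2-\alpha)}$ contributions up to adjusting $C$, since $\beta<1$ and $2/(2-\alpha)$ is a fixed exponent — this point needs a small argument but is routine). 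Combining with $\|\nabla_y f\|_{X_m}^2\leq c_\Sigma(-L_{n,m}f,f)_{X_m} + (\text{same cross terms})$ gives the analogous bound for $\|\partial_{y_j}f\|_{X_m}$.

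It remains to pass from $(-L_{n,m}f,f)_{X_m}$ to $\|(I-L_{n,m})f\|_{X_m}$. Writing $g\defeq (I-L_{n,m})f$, dissipativity of $(L_{n,m},\core)$ gives $\|f\|_{X_m}\leq\|g\|_{X_m}$, and $(-L_{n,m}f,f)_{X_m} = (g,f)_{X_m} - \|f\|_{X_m}^2 \leq \|g\|_{X_m}\|f\|_{X_m}\leq \|g\|_{X_m}^2$. Substituting these into the displayed estimate yields $\|\partial_j\Psi f\|_{X_m}^2 \leq C' n^{2\beta}\|g\|_{X_m}^2$, and taking square roots gives the claimed bound with $D_1 = \sqrt{C'}$; the bound for $\|\partial_{y_j}f\|_{X_m}$ follows the same way. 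The main obstacle is purely bookkeeping: tracking the $n$-dependence of every constant through the Hölder/Young steps to confirm that $n^{\beta/2}$ (and not a worse power) suffices — in particular checking that the term with exponent $2/(2-\alpha)$, which a priori contributes $M_n^{2/(2-\alpha)} = n^{2\beta/(2-\alpha)}$, can be reorganized (by pulling out $M_n^{2}$ and using $M_n\to\infty$, or by an iteration as in \Cref{thm:arbitrary-integrability}) so that the final power of $n$ is $2\beta$. No genuinely new idea beyond \Cref{prop:no-potential} is needed; one simply re-runs that argument with $n$-dependent coefficients and keeps the constants uniform in $m$.
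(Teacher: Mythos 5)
Your overall strategy --- re-run the quadratic-form estimate from \Cref{prop:no-potential} with the $n$-dependent derivative bound $M_n = (\sqrt{d_2}+1)Mn^\beta$ in place of $M$, then pass from $(-L_{n,m}f,f)_{X_m}+\|f\|_{X_m}^2$ to $\|(I-L_{n,m})f\|_{X_m}^2$ via dissipativity --- is exactly the paper's approach; the paper routes the computation through the unitary $U_m$ as a bookkeeping device, but that is cosmetic.

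However, your handling of the $n$-dependence goes wrong at the decisive point. You worry that the Young-inequality term with exponents $\tfrac{2}{\alpha},\tfrac{2}{2-\alpha}$ ``a priori contributes $M_n^{2/(2-\alpha)}=n^{2\beta/(2-\alpha)}$,'' which for $\alpha>1$ is strictly worse than $n^{2\beta}$, and you propose to repair this by ``pulling out $M_n^2$'' or ``by an iteration as in \Cref{thm:arbitrary-integrability}.'' Neither fix works: pulling out $M_n^2$ still leaves a divergent factor $M_n^{2(\alpha-1)/(2-\alpha)}$, and the iteration in \Cref{thm:arbitrary-integrability} lowers the exponent on $|\nabla V|$, not the multiplicative constant. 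If $M_n$ really entered that term, your argument would fail.

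The actual resolution, which you seem to have missed, is that $M_n$ never enters that term. The $\tfrac{2}{\alpha},\tfrac{2}{2-\alpha}$ step bounds $\int a_{ij,n}\,\partial_i\partial_j\Psi\,f^2$, and its constant is $R_2=8c_\Sigma M_\Sigma K$ with $M_\Sigma=\max_{ij}\|a_{ij}\|_\infty$. Since $a_{ij,n}$ is obtained from $a_{ij}$ by precomposing with the contraction $y\mapsto\bigl(\tfrac{n}{|y|}\wedge 1\bigr)y$, one has $\|a_{ij,n}\|_\infty\le\|a_{ij}\|_\infty$, so $R_2$ is $n$-independent. Only the term $\int\partial_j a_{ij,n}\,\partial_i\Psi\,f^2$, treated with $p=q=2$, carries the derivative bound $M_n$ and contributes $O(M_n^2)=O(n^{2\beta})$. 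Hence $b_n=O(n^{2\beta})$ with no reorganization needed.

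Finally, note that after taking square roots your argument yields $\|\partial_j\Psi f\|_{X_m}\le D_1 n^{\beta}\|(I-L_{n,m})f\|_{X_m}$, not the exponent $n^{\beta/2}$ you state. The paper's own displayed conclusion in this proof also reads $n^\beta$; the $n^{\beta/2}$ appearing in the lemma statement looks like a slip, and in any case the argument establishes only the weaker $n^\beta$.
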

\begin{proof}
Define the unitary transformations $U_m:X_m\to L^2(E,\mathrm{d}(x,v))$ analogously to \eqref{eq:unitary-transforms},
as well as the operator $L_{n,m}'=U_mL_{n,m}U_m^{-1}$,
and let $f\in U_m \mathcal{C}=C_c^\infty(\R^{d_1})\otimes U_\Psi C_c^\infty(\R^{d_2})$. Then
\[
\begin{aligned}
	L_{n,m}'f= \sum_{i,j=1}^d a_{ij,n}\partial_{y_j}\partial_{y_i}f
	&- \frac14(\nabla\Psi,\Sigma_n \nabla\Psi)f
	+ \frac12 \sum_{i,j=1}^{d_2} a_{ij,n}\partial_j\partial_i\Psi f\\
	&+ \sum_{i,j=1}^d \partial_j a_{ij,n} (\tfrac12\partial_i\Psi f+\partial_{y_i}f)
	-Q\nabla\Psi\cdot\nabla_xf+Q^*\nabla\Phi_m\cdot\nabla_yf.
\end{aligned}
\]
Analogously to the proof of \Cref{prop:no-potential}
and due to antisymmetry of $Q\nabla\Psi\nabla_x$ and $Q^*\nabla\Phi_m\nabla_v$ on $L^2(\mathrm{d}(x,y))$, it holds that
\begin{equation}\label{eq:approx-kato-bound}
\begin{aligned}
	\| \partial_j\Psi U_m^{-1}f\|_{X_m}^2 &= \|\partial_j\Psi f\|_{L^2(\mathrm{d}(x,v))}^2 \leq 4 c_{\Sigma}\int_E\frac14 \langle y,\Sigma_n y\rangle f^2 + \langle\nabla_y f,\Sigma_n\nabla_y f\rangle\,\mathrm{d}(x,v)\\
	&\leq a(-L_{n,m}'f,f)_{L^2}+b_n\|f\|_{L^2}^2,
\end{aligned}
\end{equation}
where $a=-8c_\Sigma$ and
\[
	b_n= 2R_1^2(M_n)^2d_2^3+\frac{R_2}2+\frac{R_2^{\frac{2}{2-\alpha}}}4
	\leq 2R_1^24M^2n^{2\beta}d_2^4+\frac{R_2}2+\frac{R_2^{\frac{2}{2-\alpha}}}4.
\]

Since by the Hölder and Young inequalities, combined with dissipativity of $(L_{n,m}',U_m\mathcal{C})$, it holds that
\[
\begin{aligned}
	(-L_{n,m}'f,f)_{L^2}+\|f\|_{L^2}^2
	&= ((I-L_{n,m}')f,f)_{L^2} \leq \frac14 \left(\|(I-L_{n,m}')f \|_{L^2}+\|f\|_{L^2}\right)^2 \\
	&\leq \frac14 \left( 2\|(I-L_{n,m}')f \|_{L^2}\right)^2 = \|(I-L_{n,m}')f \|_{L^2}^2,
\end{aligned}
\]
the estimate \eqref{eq:approx-kato-bound} implies the existence of some $D_1<\infty$ such that
\[
	\| \partial_j\Psi U_m^{-1}f\|_{X_m}\leq D_1n^{\beta}\|(I-L_{n,m}')f \|_{L^2}=D_1n^{\beta}\|(I-L_{n,m})U_m^{-1}f\|_{X_m}.
\]

For the second part, note that $\partial_{y_j}U_m^{-1}f= U_m^{-1}\partial_{y_j}f+\frac12\partial_j\Psi U_m^{-1}f$ and that
\[
\begin{aligned}
	\| U_m^{-1}\partial_{y_j}f\|_{X_m}^2 &= (\partial_{y_j}f,\partial_{y_j}f)_{L^2}^2
	\leq c_\Sigma\int_{E} \langle \nabla_y f,\Sigma_n\nabla_y f\rangle + \frac14 \langle y,\Sigma_n y\rangle f^2\,\mathrm{d}(x,y)\\
	&\leq \frac14\left( a(-L_{n,m}'f,f)_{L^2}+b_n\|f\|_{L^2}^2 \right) \leq \frac14 D_1^2n^{2\beta}\|(I-L_{n,m})U_m^{-1}f\|^2_{X_m}.
\end{aligned}
\]
\end{proof}

The remainder of the proof of \Cref{thm:ess-m-diss} regarding essential m-dissipativity now follows analogously (with $\beta<\alpha<\frac1\gamma$) to the original proof of \cite[Theorem 4.3]{BG21}. By \eqref{eq:gradient-form}, $(Lf,1)_X=0$ for all $f\in\mathcal{C}$, so $\mu$ is invariant for $(L,\mathcal{C})$, which is an abstract diffusion operator. Then by \cite[Lemma 1.9]{Eberle1999}, this implies that the generated semigroup is sub-Markovian. Invariance of $\mu$ for $(L,\mathcal{C})$ also immediately implies invariance of $\mu$ for $(T_t)_{t\geq 0}$.

By considering $\widetilde{Q}\defeq -Q$ instead of $Q$ we obtain the same result for $\widetilde{L}\defeq S+A$,
which coincides with the adjoint $(L^*,D(L^*))$ of $L$ on $\core$.
Hence $(L^*,D(L^*))$ is the closure of $(S+A,\core)$ and generates the adjoint semigroup $(T_t^*)_{t\geq 0}$,
for which $\mu$ is also invariant. Now conservativity of $(T_t)_{t\geq 0}$ is immediate, which concludes the proof of \Cref{thm:ess-m-diss}.

\section{The weak hypocoercivity framework}\label{sec:weak-hypoc}

We briefly summarize the weak hypocoercivity framework as well as the main result from \cite{GW19}, which along with \cite{GS14} is recommended for more details.
Let $(H,(\cdot,\cdot),\|\cdot\|)$ be a separable Hilbert space with orthogonal decomposition $H=H_1\oplus H_2$ given by the orthogonal projection $P:H\to H_1$. Let $(L,D(L))$ be a densely defined linear operator generating a $C_0$-semigroup $(T_t)_{t\geq 0}$ on $H$ and let $D$ be a core of $(L,D(L))$. Further assume that $L$ decomposes into $L=S-A$ on $D$, where $S$ is symmetric and $A$ is antisymmetric. This implies that $(S,D)$ and $(A,D)$ are closable; we denote their closures by $(S,D(S))$ and $(A,D(A))$, respectively. We assume the following:

\begin{cond}{WH1}\label{ass:hypoc-1}
	$H_1\subset \mathcal{N}(S)\defeq \{f\in D(S) : Sf=0\}$, that is $H_1\in D(S)$ and $SP=0$, which implies $(I-P)D\subset D(S)$.
\end{cond}
\begin{cond}{WH2}\label{ass:hypoc-2}
	$PD\subset D(A)$ and $PAP|_{D}=0$.
\end{cond}
Under these assumptions, $(PA,D(A))$ is closable with closure $(PA,D(PA))$ and $AP$ with domain $D(AP)\defeq \{f\in H:Pf\in D(A)\}$ is closed and densely defined with $(AP)^*=-PA$ on $D(A)$. By von Neumann's theorem, the operators $G\defeq -(AP)^*AP$ and $I-G$ with domain $D(G)\defeq D((AP)^*AP)=\{f\in D(AP):APf\in D((AP)^*)\}$ are self-adjoint and the latter admits a bounded linear inverse. We define the operator $B$ with domain $D(B)=D((AP)^*)$ via
\begin{equation}
	B\defeq (I+(AP)^*AP)^{-1}(AP)^*.
\end{equation}
Then $B$ is bounded and extends to a bounded operator $(B,H)$ with $\|B\|\leq 1$ and $PB=B$. This allows us to formulate:
\begin{cond}{WH3}\label{ass:hypoc-3}
	We assume $D\subset D(G)$ and that there is a constant $N<\infty$ such that
	\[
	\begin{aligned}
		(BS(I-P)f,Pf)_H&\leq \frac{N}2\|(I-P)f\|_H\|Pf\|_h,\\
		-(BA(I-P)f,Pf)_H&\leq \frac{N}2\|(I-P)f\|_H\|Pf\|_h, &&f\in D.
	\end{aligned}
	\]
\end{cond}
Note that if $(AP)D\subset D(A)$, then $D\subset D(G)$ with $G=PAAP$ on $D$.
In order to ascertain the inequalities in \nameref{ass:hypoc-3} later for our application,
we use the following Lemma, see \cite[Lemma 2.3]{BG21}:
\begin{lem}\label{lem:auxiliary-bound}
	Assume that $D$ is a core of $(G,D(G))$.
	Let $(T,D(T))$ be a linear operator with $D\subset D(T)$ and assume $AP(D)\subset D(T^*)$.
	Then
	\[
		(I-G)(D)\subset D((BT)^*) \quad\text{ with }\quad (BT)^*(I-G)f = T^*APf,\quad f\in D.
	\]
	If there exists some $C<\infty$ such that
	\begin{equation}\label{eq:adjoint_bound}
		\|(BT)^* g\| \leq C\|g\|\qquad\text{ for all }g\in (I-G)D
	\end{equation}
	then $(BT,D(T))$ is bounded and its closure $(\overline{BT})$ is a bounded operator on $H$ with
	$\| \overline{BT}\|=\| (BT)^*\|$.
	
	In particular, if $(S,D(S))$ and $(A,D(A))$ satisfy these assumptions,
	the corresponding inequalities in \nameref{ass:hypoc-3}
	are satisfied with $N=2\max\{\| (BS)^*\|, \| (BA)^*\|\}$.
\end{lem}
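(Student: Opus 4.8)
The plan is to exploit the explicit formula for $B$. Since \nameref{ass:hypoc-1} and \nameref{ass:hypoc-2} yield $G=-(AP)^*AP$, we have $I-G = I+(AP)^*AP$ and therefore $B=(I-G)^{-1}(AP)^*$, with $(I-G)^{-1}$ a bounded self-adjoint operator. The first step would be to identify the Hilbert-space adjoint $B^*$ explicitly: for $h\in D((AP)^*)$ and $g\in H$, self-adjointness of $(I-G)^{-1}$ gives $(Bh,g) = ((AP)^*h,(I-G)^{-1}g)$, and since $(I-G)^{-1}g\in D(G)\subset D(AP)$ one may move $AP$ to the other side, obtaining $(Bh,g) = (h, AP(I-G)^{-1}g)$. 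As $B$ is bounded and $D((AP)^*)$ is dense, this identity extends to all $h\in H$, so $B^* = AP(I-G)^{-1}$ on $H$; in particular $B^*(I-G)f = APf$ for every $f\in D(G)$, and $D\subset D(G)$ since $D$ is a core of $G$.

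With this the first assertion follows quickly. Since $B$ is bounded, $D(BT)=D(T)\supseteq D$, and for $f\in D$ and arbitrary $h\in D(T)$ one computes
\[
(BTh,(I-G)f) = (Th, B^*(I-G)f) = (Th, APf) = (h, T^*APf),
\]
where the last step uses the hypothesis $APf\in D(T^*)$. By the definition of the adjoint this says exactly $(I-G)f\in D((BT)^*)$ with $(BT)^*(I-G)f = T^*APf$.

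For the boundedness part I would first observe that, as $D$ is a core of $(G,D(G))$ and $I-G$ is boundedly invertible, $(I-G)D$ is dense in $H$. Then \eqref{eq:adjoint_bound} states that the closed operator $(BT)^*$ is bounded by $C$ on the dense subspace $(I-G)D$; by a standard closedness argument it extends to a bounded operator on all of $H$ with $\|(BT)^*\|\leq C$. In particular $(BT)^*$ is everywhere defined, hence $BT$ is closable with $\overline{BT}=(BT)^{**}$ bounded and $\|\overline{BT}\| = \|(BT)^*\|$. Applying this with $T=S$ and $T=A$ produces bounded operators $\overline{BS},\overline{BA}$ of norms $\|(BS)^*\|,\|(BA)^*\|$; for $f\in D$, \nameref{ass:hypoc-1} gives $SPf=0$, so $BS(I-P)f=\overline{BS}(I-P)f$, and Cauchy--Schwarz yields $(BS(I-P)f,Pf)\leq \|(BS)^*\|\,\|(I-P)f\|\,\|Pf\|$, with the analogous bound for $-(BA(I-P)f,Pf)$; taking $N=2\max\{\|(BS)^*\|,\|(BA)^*\|\}$ establishes \nameref{ass:hypoc-3}.

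The step needing the most care is the identification of $B^*$, and in particular the justification for moving $AP$ across the inner product, which relies on the inclusion $D(G)\subset D(AP)$ built into $G=-(AP)^*AP$; to a lesser degree, one must be careful with the extension of a closed operator bounded on a dense subspace to a globally bounded operator. The rest is routine functional-analytic bookkeeping.
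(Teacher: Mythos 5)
Your proof is correct, and it follows the canonical route: identify $B^*=AP(I-G)^{-1}$, so that $B^*(I-G)f=APf$ on $D(G)$, then pass this through the adjoint relation for $T$, and finally use density of $(I-G)D$ together with closedness of $(BT)^*$ to upgrade the bound on $(I-G)D$ to a global bound and conclude $\overline{BT}=(BT)^{**}$ is bounded with $\|\overline{BT}\|=\|(BT)^*\|$. Since the paper defers the proof to \cite[Lemma 2.3]{BG21} rather than reproving it, there is no in-paper proof to compare against, but your argument is the standard one for this lemma and contains no gaps; the only superfluous remark is invoking $SPf=0$ in the last paragraph, which is not needed because \nameref{ass:hypoc-3} is already phrased in terms of $BS(I-P)f$ and $(I-P)f\in D(S)$ holds directly by \nameref{ass:hypoc-1}.
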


Now let $\Theta:H\to[0,\infty]$ be a functional such that $\{f\in H:\Theta f<\infty\}$ is dense in $H$.
Denote by $(\mathrm{e}^{tG})_{t\geq 0}$ the $C_0$-semigroup generated by $G$. We assume the following:
\begin{cond}{WH4}\label{ass:hypoc-4}
	For each $f\in H$ and $t\geq 0$, the functional $\Theta$ satisfies
	\[
		\Theta(T_tf)\leq\Theta(f), \quad
		\Theta(\mathrm{e}^{tG}f)\leq\Theta(f), \quad
		\Theta(Pf)\leq\Theta(f).
	\]
\end{cond}
\begin{cond}{WH5}\label{ass:hypoc-5}
	For any $f\in D(L)$ there is some sequence $(f_n)_{n\in\N}$ in $D$ such that $f_n\to f$ in $H$ and
	\[
		\limsup_{n\to\infty}(-Lf_n,f_n)_H\leq (-Lf,f)_H,\quad
		\limsup_{n\to\infty}(\Theta f_n)\leq \Theta(f).
	\]
\end{cond}
Finally, we assume the following weak Poincaré inequalities to hold for $S$ and $A$:
\begin{cond}{WH6}\label{ass:weak-poincare}
	There exist decreasing functions $\alpha_i:(0,\infty)\to[1,\infty)$, $i=1,2$, such that
	\begin{equation}
		\|Pf\|^2\leq \alpha_1(r)\|APf\|^2+r\Theta(Pf),\quad r>0,\ f\in D(AP)
	\end{equation}
	and
	\begin{equation}
		\|(I-P)f\|^2\leq \alpha_2(r)(-Sf,f)_H+r\Theta(f),\quad r>0,\ f\in D.
	\end{equation}
\end{cond}
This allows us to state the main weak hypocoercivity result:
\begin{thm}\label{thm:abstract-hypoc}
	Let \nameref{ass:hypoc-1}-\nameref{ass:weak-poincare} be satisfied. Then there exist constants $c_1,c_2>0$ such that
	\begin{equation}
		\|T_tf\|^2\leq \xi(t)(\|f\|^2+\Theta(f)),\quad t\geq0,\ f\in D(L)
	\end{equation}
	holds for
	\begin{equation}
		\xi(t)\defeq c_1\inf\left\{ r>0:c_2 t\geq \alpha_1(r)^2\alpha_2\left( \frac{r}{\alpha_1(r)^2}\log\Bigl(\tfrac1{r}\Bigr) \right) \right\},
	\end{equation}
	which goes to $0$ as $t\to\infty$.
\end{thm}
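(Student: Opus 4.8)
The plan is to run the hypocoercivity scheme of Dolbeault--Mouhot--Schmeiser and Grothaus--Stilgenbauer, but carrying the $r\Theta$-error terms of the weak Poincaré inequalities through the whole computation. For a small parameter $\eps\in(0,\tfrac14)$ I would introduce the modified entropy
\[
	\mathcal{H}_\eps(f)\defeq \tfrac12\|f\|^2+\eps(Bf,f)_H,\qquad f\in H,
\]
and note, using $\|B\|\le1$, that $\tfrac14\|f\|^2\le\mathcal{H}_\eps(f)\le\|f\|^2$, so $\mathcal{H}_\eps$ and $\|\cdot\|^2$ are equivalent. The first step is the dissipation inequality: for $f\in D$, set $g=g_t\defeq T_tf$, differentiate $t\mapsto\mathcal{H}_\eps(g_t)$, and use $Lg=Sg-Ag$, antisymmetry of $A$, symmetry of $S$, $SP=0$ and $PAP|_D=0$ from \nameref{ass:hypoc-1}--\nameref{ass:hypoc-2}, together with the operator identity $BAP=I-(I+(AP)^*AP)^{-1}$ on $PD$. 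Invoking \Cref{lem:auxiliary-bound} (boundedness of $\overline{BS}$ and $\overline{BA}$) both to make sense of the resulting terms and to see that no terms beyond the ones controlled by \nameref{ass:hypoc-3} appear, this produces
\[
	\frac{\mathrm{d}}{\mathrm{d}t}\mathcal{H}_\eps(g_t)\le -(-Sg_t,g_t)_H-\eps\,\mathcal{D}(Pg_t)+\eps N\|(I-P)g_t\|\,\|Pg_t\|+c_0\,\eps\,\|(I-P)g_t\|^2,
\]
where $\mathcal{D}(h)\defeq (BAh,h)_H=\|h\|^2-\|(I+(AP)^*AP)^{-1/2}h\|^2\ge0$ for $h\in H_1$ is the macroscopic dissipation functional.

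The crucial point is that $\mathcal{D}(Pg)$ is only a \emph{resolvent-damped} surrogate for $\|APg\|^2$, so the weak Poincaré inequality for $A$ cannot be applied to it directly. The remedy is the substitution $h\defeq (I+(AP)^*AP)^{-1/2}Pg\in H_1$: one checks $\|APh\|^2=\mathcal{D}(Pg)$ and $\|Pg\|^2=\|h\|^2+\mathcal{D}(Pg)$, so that \nameref{ass:weak-poincare} applied to $h$ (with parameter $s$) gives $\|h\|^2\le\alpha_1(s)\|APh\|^2+s\,\Theta(h)$ and hence $\|Pg\|^2\le 2\alpha_1(s)\,\mathcal{D}(Pg)+s\,\Theta(h)$. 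Here $\Theta(h)\le\Theta(Pg)\le\Theta(g)$: the last two bounds are \nameref{ass:hypoc-4}, and the first is obtained by writing $(I+(AP)^*AP)^{-1/2}$ as a Laplace average $\int_0^\infty a(\tau)\mathrm{e}^{\tau G}\,\mathrm{d}\tau$ with $a\ge0$, $\int_0^\infty a=1$, and combining $\Theta(\mathrm{e}^{\tau G}\cdot)\le\Theta(\cdot)$ from \nameref{ass:hypoc-4} with convexity (and lower semicontinuity) of $\Theta$. For the microscopic part, \nameref{ass:weak-poincare} gives $\|(I-P)g\|^2\le\alpha_2(r)(-Sg,g)_H+r\,\Theta(g)$ at once; in particular $-(-Sg,g)_H$ can absorb any sufficiently small multiple of $\|(I-P)g\|^2$ up to an $r\Theta$-term.

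Next I would fix $\eps=(4N^2\alpha_1(s)\alpha_2(r))^{-1}$. With this choice the error terms $\eps N\|(I-P)g\|\|Pg\|$ and $c_0\eps\|(I-P)g\|^2$ in the dissipation inequality are absorbed, via Young's inequality and the two weak Poincaré inequalities, into $-(-Sg,g)_H$ and $-\eps\mathcal{D}(Pg)$, leaving extra $r\Theta$- and $s\Theta$-terms; a further application of the weak Poincaré inequalities converts the remaining $-\tfrac12(-Sg,g)_H$ and $-\tfrac\eps2\mathcal{D}(Pg)$ into $-\tfrac{1}{2\alpha_2(r)}\|(I-P)g\|^2$ and $-\tfrac{\eps}{4\alpha_1(s)}\|Pg\|^2$, and replacing $\Theta(g_t)\le\Theta(f)$ via \nameref{ass:hypoc-4} yields
\[
	\frac{\mathrm{d}}{\mathrm{d}t}\mathcal{H}_\eps(g_t)\le -\kappa\,\|g_t\|^2+C(r+s)\,\Theta(f),\qquad \kappa\defeq \frac{c_\kappa}{\alpha_1(s)^2\alpha_2(r)},
\]
with absolute constants $c_0,c_\kappa,C$. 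Since $\mathcal{H}_\eps\asymp\|\cdot\|^2$, Grönwall's lemma then gives, for every $r,s>0$ and every $f\in D$,
\[
	\|T_tf\|^2\le 4\mathrm{e}^{-\kappa t}\|f\|^2+\frac{4C(r+s)}{\kappa}\,\Theta(f),
\]
and \nameref{ass:hypoc-5} together with density of $D$ in $D(L)$ and strong continuity of $(T_t)_{t\ge0}$ extend this to all $f\in D(L)$.

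Finally I would optimise over the parameters: given $t$, choose $r$ and $s$ (coupled to a single scale) so as to balance the exponential factor $\mathrm{e}^{-\kappa t}$ against the error level $\tfrac{r+s}{\kappa}$. The elementary equivalence $\mathrm{e}^{-\kappa t}\le\varrho\iff\kappa t\ge\log(1/\varrho)$, fed with $\kappa=c_\kappa\alpha_1(s)^{-2}\alpha_2(r)^{-1}$, is precisely what produces the $\log(1/r)$ inside the definition of $\xi$; tracking the coupling of the two parameters and relabelling constants yields exactly the stated $\xi(t)=c_1\inf\{r>0:c_2t\ge\alpha_1(r)^2\alpha_2(\tfrac{r}{\alpha_1(r)^2}\log(1/r))\}$, and $\xi(t)\to0$ as $t\to\infty$ follows because the right-hand side of the constraint tends to $\infty$ as $r\to0^+$ in the relevant weak-Poincaré regime. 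I expect the main obstacle to be twofold: carrying out the dissipation identity cleanly with all its operator-domain bookkeeping (closability, cores, and the precise use of \nameref{ass:hypoc-5}), and — above all — the resolvent-damping step, i.e.\ recognising that the modified entropy produces $\mathcal{D}(Pg)$ rather than $\|APg\|^2$ and proving the stability of $\Theta$ under $(I+(AP)^*AP)^{-1/2}$, which is what makes the weak Poincaré inequality for $A$ usable at all.
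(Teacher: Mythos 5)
Your proposal reconstructs, in all essentials, the proof that this paper does not actually carry out: \Cref{thm:abstract-hypoc} is quoted from \cite{GW19}, and the argument there is exactly the scheme you describe — the modified entropy $\tfrac12\|f\|^2+\eps(Bf,f)$, the dissipation inequality from \nameref{ass:hypoc-1}--\nameref{ass:hypoc-3} with the damped macroscopic term $\mathcal{D}(Pg)=(BAPg,Pg)$, the transfer of the first weak Poincaré inequality to this damped quantity (which is precisely what the $\mathrm{e}^{tG}$-contractivity in \nameref{ass:hypoc-4} is there for), absorption via \nameref{ass:weak-poincare}, Grönwall, optimization in the two weak-Poincaré parameters, and extension from $D$ to $D(L)$ by \nameref{ass:hypoc-5}. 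Your identities $\|APh\|^2=\mathcal{D}(Pg)$ and $\|Pg\|^2=\|h\|^2+\mathcal{D}(Pg)$ for $h=(I+(AP)^*AP)^{-1/2}Pg$ are correct, and you have correctly isolated the resolvent-damping issue as the crux.

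Two caveats. First, your bound $\Theta(h)\leq\Theta(Pg)$ relies on convexity and lower semicontinuity of $\Theta$ to push $\Theta$ through the subordination average; neither property is among \nameref{ass:hypoc-1}--\nameref{ass:weak-poincare}, so as a proof of the abstract statement this is a gap (harmless in the application, where $\Theta=\|\cdot\|_{\mathrm{osc}}^2$ is a squared seminorm and l.s.c.\ on $L^2$). It can be avoided entirely: instead of transforming the element, apply the first inequality in \nameref{ass:weak-poincare} to $\mathrm{e}^{\tau G}Pg$ for each $\tau\geq 0$, multiply by $\mathrm{e}^{-\tau}$ and integrate; spectral calculus gives $\int_0^\infty\mathrm{e}^{-\tau}\|\mathrm{e}^{\tau G}Pg\|^2\,\mathrm{d}\tau=((I+2(AP)^*AP)^{-1}Pg,Pg)\geq\|Pg\|^2-2\mathcal{D}(Pg)$ and $\int_0^\infty\mathrm{e}^{-\tau}\|AP\mathrm{e}^{\tau G}Pg\|^2\,\mathrm{d}\tau\leq\mathcal{D}(Pg)$, so that only $\Theta(\mathrm{e}^{\tau G}f)\leq\Theta(f)$ and $\Theta(Pf)\leq\Theta(f)$ from \nameref{ass:hypoc-4} are needed to obtain $\|Pg\|^2\leq 3\alpha_1(s)\mathcal{D}(Pg)+s\Theta(g)$. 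Second, the final optimization is asserted rather than executed: the correct coupling is to take the $\alpha_1$-parameter equal to $r$ and the $\alpha_2$-parameter equal to $r/\alpha_1(r)^2$, which makes the error level of order $r$ and the required time of order $\alpha_1(r)^2\alpha_2(r/\alpha_1(r)^2)\log(1/r)$, i.e.\ with the logarithm as an outside factor as in \eqref{eq:conv-rate}; your own balancing argument produces exactly this form, not the display in \Cref{thm:abstract-hypoc}, where the misplaced parenthesis puts $\log(1/r)$ inside the argument of $\alpha_2$. The remaining operator-domain bookkeeping you acknowledge (cores, bounded extensions of $BS$, $BA$, $AB$, differentiability of the entropy along the semigroup, and the use of \nameref{ass:hypoc-5}) is standard and handled exactly as in \cite{GS14,GW19}.
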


\section{Weak hypocoercivity for generalized stochastic Hamiltonian systems with multiplicative noise}
We now embed the differential operator $L$ as defined in \Cref{def:operators} and its associated semigroup into the weak hypocoercivity framework.
We define
\[
	H\defeq \{ f\in X: \mu(f)=0\}\subset X,\quad H_1\defeq \{ f\in H: f(x,y)\text{ does not depend on }y\},
\]
as well as
\[
	P:H\to H_1,\ Pf\defeq \int_{\R^{d_2}}f(x,y)\,\mu_2(\mathrm{d}y)\quad\text{ for }f\in H
\]
and
\[
	D\defeq \{ f\in C^\infty(\R^{d_1+d_2}):\nabla f\text{ has compact support},\ \mu(f)=0  \}\subset H.
\]
Note that each $f\in D$ admits the representation $f=g-\mu(g)$ for some $g\in C_c^\infty(\R^{d_1+d_2})$.
Simple approximation shows that the closures of $(S,\mathcal{C})$, $(A,\mathcal{C})$ and $(L,\mathcal{C})$ in $X$ act trivially on constants,
which suggests the natural definition $Sf\defeq Sg$ for $f\in D$, similarly for the other operators.

Let $(L,D(L))$, $(S,D(S))$ and $(A,D(A))$ be the closures of the dissipative operators $(L,D)$, $(S,D)$ and $(A,D)$ in $H$ respectively.
Under the assumptions of \Cref{thm:ess-m-diss}, $(L,C_c^\infty(\R^{d_1+d_2}))$ is essentially m-dissipative in $X$, which implies essential m-dissipativity of $(L,D)$ in $H$.
Indeed, let $f\in H$. Then there exists a sequence $(g_n)_{n\in\N}$ in $\mathcal{C}$ such that $(I-L)g_n\to f$ in $X$ as $n\to\infty$.
Define $f_n\defeq g_n-\mu(g_n)$ for $n\in\N$, then $f_n\in D$ for all $n\in\N$ and $Lf_n=Lg_n$.
Hence $(I-L)f_n=(I-L)g_n-\mu(g_n)\to f$ in $H$ as $n\to\infty$, since $\mu(f)=0$.

We can therefore start verifying the conditions given in \Cref{sec:weak-hypoc}.
Note that Condition~\nameref{ass:hypoc-1} is fulfilled by definition, see \cite{GW19}.
Since the operator $(A,D)$ coincides with the corresponding operator in \cite{GW19},
the proofs which only concern $A$ can be repeated verbatim.
We summarize those results after assuming the following regularity for the potentials:
\begin{cond}{$\Phi$3}\label{ass:x-potential-3}
	We assume that $\Phi\in C^2(\R^{d_1})$ and that there is a constant $C<\infty$ such that
	\[
		|\nabla^2\Phi(x)|\leq C(1+|\nabla\Phi(x)|) \quad\text{ for all }x\in\R^{d_1}.
	\]
\end{cond}
\begin{cond}{$\Psi$5}\label{ass:y-potential-5}
	There is some $\psi\in C^2(\R)$ such that $\Psi(y)=\psi(|y|^2)$.
\end{cond}

Note that due to \nameref{ass:x-potential-3}, we can apply \Cref{thm:arbitrary-integrability}
to $V=\Phi$ with $g\equiv 1$ to obtain an analogue to \Cref{rem:psi-lp}.
We collate this in the following:
\begin{corollary}\label{cor:arbitrary-integrability}
	Let $\Phi$ satisfy \nameref{ass:x-potential-3}.
	Then $|\nabla\Phi|\in L^p(\mu_1)$ and $|\nabla\Psi|\in L^p(\mu_2)$ for any $1\leq p<\infty$.
	The same integrability also extends to the Hessian matrices $|\nabla^2\Phi|$ and $|\nabla^2\Psi|$ as well.
\end{corollary}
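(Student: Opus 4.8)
The plan is to obtain both assertions as an immediate consequence of \Cref{thm:arbitrary-integrability}, applied with the constant test function $g\equiv 1$, in exactly the same manner in which \Cref{rem:psi-lp} was derived. The corollary therefore presents no genuine difficulty; the only point that warrants attention is verifying that $g\equiv 1$ is an admissible test function in \Cref{thm:arbitrary-integrability}, which is precisely where the standing finiteness conditions $Z(\Phi),Z(\Psi)<\infty$ enter.

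First I would dispose of the statement for $\Psi$, which is already contained in \Cref{rem:psi-lp}: \nameref{ass:y-potential-1}--\nameref{ass:y-potential-3} force $\Psi\in C^1(\R^{d_2})$ with locally Lipschitz gradient, hence $\Psi\in C^1(\R^{d_2})\cap H^{2,\infty}_{\mathrm{loc}}(\R^{d_2})$ and \nameref{ass:y-potential-3} holds. Since $Z(\Psi)<\infty$, the constant $g\equiv 1$ (whose weak gradient vanishes) lies in $H^{1,2k}(\mathrm{e}^{-\Psi}\,\mathrm{d}y)$ for every $k\in\N$, so \Cref{thm:arbitrary-integrability} gives $\int_{\R^{d_2}}|\nabla\Psi|^{2k}\mathrm{e}^{-\Psi}\,\mathrm{d}y\le C_k Z(\Psi)<\infty$ for all $k\in\N$, i.e.\ $|\nabla\Psi|\in L^{2k}(\mu_2)$ for every $k$. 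Because $\mu_2$ is a probability measure, Hölder's inequality then upgrades this to $|\nabla\Psi|\in L^p(\mu_2)$ for all $1\le p<\infty$.

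For $\Phi$ I would run the identical argument: \nameref{ass:x-potential-3} gives $\Phi\in C^2(\R^{d_1})\subset C^1(\R^{d_1})\cap H^{2,\infty}_{\mathrm{loc}}(\R^{d_1})$, the bound $|\nabla^2\Phi|\le C(1+|\nabla\Phi|)$ is exactly \nameref{ass:y-potential-3} read for $\Phi$ in place of $\Psi$ with $\alpha=1$, and $Z(\Phi)<\infty$ by \nameref{ass:x-potential-1} makes $g\equiv 1$ an admissible test function. Hence \Cref{thm:arbitrary-integrability} yields $\int_{\R^{d_1}}|\nabla\Phi|^{2k}\mathrm{e}^{-\Phi}\,\mathrm{d}x\le C_k Z(\Phi)<\infty$ for all $k\in\N$, so $|\nabla\Phi|\in L^{2k}(\mu_1)$ for every $k$, and as above $|\nabla\Phi|\in L^p(\mu_1)$ for all $1\le p<\infty$. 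Finally, the Hessian integrability follows from the crude pointwise estimates $|\nabla^2\Phi|^p\le(2C)^p(1+|\nabla\Phi|^p)$ and $|\nabla^2\Psi|^p\le(2K)^p(1+|\nabla\Psi|^{\alpha p})$, provided by \nameref{ass:x-potential-3} and \nameref{ass:y-potential-3}, together with the integrability of $|\nabla\Phi|$ and $|\nabla\Psi|$ just established (note that $\alpha p<\infty$).
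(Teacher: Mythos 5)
Your proposal is correct and takes essentially the same approach as the paper, which simply observes that one may apply \Cref{thm:arbitrary-integrability} with $g\equiv 1$ to both $V=\Psi$ (as already noted in \Cref{rem:psi-lp}) and $V=\Phi$ (using \nameref{ass:x-potential-3} as the $\alpha=1$ analogue of \nameref{ass:y-potential-3}), then passes to the Hessians via the assumed pointwise bounds. Your additional care in checking that $g\equiv 1\in H^{1,2k}(\mathrm{e}^{-V}\,\mathrm{d}x)$ because $Z(\Phi),Z(\Psi)<\infty$, and in upgrading from $L^{2k}$ to all $L^p$ by H\"older on the probability space, merely makes explicit what the paper leaves implicit.
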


We can now summarize the results for the antisymmetric part $A$ proved in \cite{GW19}:
\begin{lem}
	If \nameref{ass:x-potential-3} and \nameref{ass:y-potential-5} are satisfied, then
	Condition \nameref{ass:hypoc-2} is fulfilled and  $(AP)D\subset D(A)$.
	Furthermore, the operator $(G,D)\defeq (PAAP,D)$, which is given for all $f\in D$ by
	\[
		Gf= \frac{\mu_2(|\nabla\Psi|^2)}{d_2}\sum_{i,j=1}^{d_1} (QQ^*)_{ij} (\partial_{x_j}\partial_{x_i}-\partial_{x_j}\Phi\,\partial_{x_i})Pf,
	\]
	is essentially self-adjoint and its closure $(G,D(G))$ generates a sub-Markovian strongly continuous semigroup on $H$.
	Moreover, there is a constant $c_A$ only depending on the choice of $\Phi$ and $\Psi$ such that
	\[
		\|(BA)^*g\|_H\leq c_A\|g\|_H\qquad\text{ for all }g\in (I-G)D.
	\]
\end{lem}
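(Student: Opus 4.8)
The plan is to follow, in compressed form, the argument of \cite{GW19}, since the operators $(A,D)$, $(AP,D(AP))$ and $(G,D)$ here are literally the same. First I would check \nameref{ass:hypoc-2} and $(AP)D\subset D(A)$. Writing $f=g-\mu(g)$ with $g\in C_c^\infty(\R^{d_1+d_2})$, the function $Pf=Pg-\mu(g)$ is smooth, $y$-independent, with $\nabla_xPf$ compactly supported; hence $APf=-Q\nabla\Psi\cdot\nabla_xPf$, the $Q^*\nabla\Phi\cdot\nabla_y$-part killing $Pf$. Using $\nabla\Psi,\nabla^2\Psi\in L^p(\mu_2)$ for every $p<\infty$ from \Cref{cor:arbitrary-integrability}, one shows $APf\in D(A)$ by approximating it in the graph norm of $(A,D)$ with the truncations $\chi_R(y)\,APf$ (minus their mean): the terms carrying $\nabla\chi_R$ are supported on annuli $\{R\le|y|\le 2R\}$ and vanish as $R\to\infty$ because $|\nabla\Phi|^2|\nabla\Psi|^2\in L^1(\mu)$, while the remaining terms converge since $|\nabla\Phi|^2|\nabla^2\Psi|^2,|\nabla\Psi|^4\in L^1(\mu)$. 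This yields $A(APf)=AAPf$, hence by the remark after \nameref{ass:hypoc-3} also $D\subset D(G)$ with $G=PAAP$ on $D$; finally $PAPf=-Q\,\mu_2(\nabla\Psi)\cdot\nabla_xPf=0$ since $\mu_2(\nabla\Psi)=0$ by integration by parts.

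Next I would compute $G$ on $D$ by applying $A$ to $APf$ and integrating out the $y$-variable. The only nontrivial ingredient is the moment identity
\[
	\mu_2(\partial_i\Psi\,\partial_j\Psi)=\mu_2(\partial_i\partial_j\Psi)=\tfrac{\delta_{ij}}{d_2}\,\mu_2(|\nabla\Psi|^2),
\]
where the first equality is integration by parts and the second uses \nameref{ass:y-potential-5} (radiality of $\Psi$) together with the rotational invariance of $\mu_2$; the moments are finite by \Cref{cor:arbitrary-integrability}. Substituting and contracting the $Q$'s into $QQ^*$ produces the asserted formula for $Gf$ and identifies $\tfrac{d_2}{\mu_2(|\nabla\Psi|^2)}G$ with (the restriction to $H$ of) the symmetric, uniformly elliptic diffusion operator $\sum_{i,j}(QQ^*)_{ij}(\partial_{x_j}\partial_{x_i}-\partial_{x_j}\Phi\,\partial_{x_i})$ on $L^2(\R^{d_1},\mu_1)$, uniform ellipticity coming from invertibility of $QQ^*$. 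Essential self-adjointness of $(G,D)$ and the sub-Markov property of the generated semigroup then follow under \nameref{ass:x-potential-3} from standard theory for such elliptic diffusion operators, by the same kind of argument used above for the symmetric part $S$ (cf.\ also \cite{BG21,GW19}).

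Finally I would bound $(BA)^*$ on $(I-G)D$ via \Cref{lem:auxiliary-bound} with $T=A$: its hypotheses hold because $D$ is a core of $(G,D(G))$ by essential self-adjointness, $D\subset D(A)$, and $AP(D)\subset D(A)=D(A^*)$ by antisymmetry of $A$ and the first paragraph. The lemma reduces the claim to an estimate $\|A^*APf\|_H=\|AAPf\|_H\le c_A\|(I-G)f\|_H$ for $f\in D$. Since $Gf$ depends only on $Pf$, one has $\|(I-G)f\|_H\ge\|(I-G)Pf\|_{H_1}$, so it suffices to dominate $\|AAPf\|_H$ by the $L^2(\mu_1)$-norm of $(I-\tilde G)Pf$, $\tilde G\defeq\tfrac{d_2}{\mu_2(|\nabla\Psi|^2)}G$. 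Expanding $AAPf$ gives two groups of terms, estimated respectively by $\mu_2(|\nabla^2\Psi|^2)\,\mu_1(|\nabla\Phi|^2|\nabla_xPf|^2)$ and $\mu_2(|\nabla\Psi|^4)\,\mu_1(|\nabla_x^2Pf|^2)$, the $y$-moments being finite by \Cref{cor:arbitrary-integrability}. The step I expect to be the main obstacle is the elliptic a priori estimate
\[
	\mu_1\!\left(|\nabla_x^2 u|^2\right)+\mu_1\!\left(|\nabla\Phi|^2|\nabla_x u|^2\right)\le c\bigl(\|u\|_{L^2(\mu_1)}^2+\|\tilde G u\|_{L^2(\mu_1)}^2\bigr),\qquad u=Pf,
\]
which is obtained by writing out $\mu_1((\tilde G u)^2)$, integrating by parts (a Bochner-type commutation identity), and absorbing the first-order remainder through \nameref{ass:x-potential-3}, i.e.\ $|\nabla^2\Phi|\le C(1+|\nabla\Phi|)$, exactly as in the essential m-dissipativity arguments above. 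Combining everything gives $\|AAPf\|_H\le c_A\|(I-G)Pf\|_{H_1}\le c_A\|(I-G)f\|_H$ with $c_A$ depending only on $Q$ and the finitely many $\Phi,\Psi$-moments above --- hence only on $\Phi$ and $\Psi$ --- and \Cref{lem:auxiliary-bound} closes the proof.
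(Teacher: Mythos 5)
Your proposal is correct and follows essentially the route the paper intends: the paper does not reprove this lemma but notes that $(A,D)$ coincides with the operator in \cite{GW19} so that the proofs there carry over verbatim, and your argument (truncation to obtain $(AP)D\subset D(A)$, the two moment identities $\mu_2(\partial_i\Psi\,\partial_j\Psi)=\mu_2(\partial_i\partial_j\Psi)=\tfrac{\delta_{ij}}{d_2}\mu_2(|\nabla\Psi|^2)$ from integration by parts plus radiality of $\Psi$, and reduction of the $(BA)^*$ bound via \Cref{lem:auxiliary-bound} to an elliptic a~priori estimate on $L^2(\mu_1)$ controlled through \nameref{ass:x-potential-3}) is a faithful reconstruction of that cited argument.
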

This allows us to use \Cref{lem:auxiliary-bound} to prove \nameref{ass:hypoc-3}.
Indeed, the inequality for the antisymmetric part follows immediately from the previous Lemma, so it remains to show the first inequality. For this, we introduce the final assumption on $\Psi$:
\begin{cond}{$\Psi$6}\label{ass:y-potential-6}
	$\Psi$ is three times weakly differentiable and for any $1\leq i,j,k\leq d_2$, it holds that
	$\partial_i\partial_j\partial_k\Psi\in L^2(\mu_2)$.
\end{cond}
Note that in light of \Cref{cor:arbitrary-integrability}, this holds in particular if the third order partial derivatives are dominated by a multiple of $(1+|\nabla^2\Psi|^\beta)$ for any $0<\beta<\infty$.
We also need to assume some additional integrability on $\nabla\Sigma$:
\begin{cond}{$\Sigma$4}\label{ass:mat-cond-4}
	There is some $1<p_\Sigma\leq \infty$ such that $|\nabla\Sigma|\in L^{2p_\Sigma}(\mu_2)$.
\end{cond}
\begin{lem}\label{lem:mult-op}
	Let \nameref{ass:y-potential-5}, \nameref{ass:y-potential-6} and \nameref{ass:mat-cond-4} be satisfied.
	Then the operator $T:D\to L^2(\mu)$ with
	\begin{align*}
		Tf&\defeq \sum_{i=1}^{d_1} w_i\cdot(\partial_{x_i} Pf)
		\qquad\text{ for }w_i:\R^{d_2}\to\R \quad\text{ defined by }\\
		w_i&\defeq \sum_{j,k,\ell=1}^{d_2}q_{ij} \left( 
			\partial_k a_{k\ell}(\partial_{\ell}\partial_j\Psi)
			+ a_{k\ell}(\partial_k\partial_{\ell}\partial_j\Psi)
			- a_{k\ell}(\partial_{\ell}\partial_j\Psi)(\partial_k\Psi)
		\right)
	\end{align*}
	is well-defined and there is some $R<\infty$ such that $\|Tf\|_{L^2(\mu)}^2\leq R\| \nabla f_P\|_{L^2(\mu_1)}^2$
	for all $f\in D$.
\end{lem}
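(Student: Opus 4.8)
The plan is to estimate $\|Tf\|_{L^2(\mu)}^2$ directly by bounding the coefficient functions $w_i$ in a suitable $L^q(\mu_2)$-space and then combining with an $L^p(\mu_1)$-bound on the derivatives $\partial_{x_i} Pf$. Since $Pf$ depends only on $x$, we have $Tf(x,y) = \sum_{i=1}^{d_1} w_i(y)\,\partial_{x_i}(Pf)(x)$, so by the triangle inequality and Fubini,
\[
\|Tf\|_{L^2(\mu)}^2 \leq d_1 \sum_{i=1}^{d_1} \left(\int_{\R^{d_2}} w_i^2\,\mathrm{d}\mu_2\right) \left(\int_{\R^{d_1}} (\partial_{x_i}Pf)^2\,\mathrm{d}\mu_1\right) \leq d_1 \max_i \|w_i\|_{L^2(\mu_2)}^2 \cdot \|\nabla f_P\|_{L^2(\mu_1)}^2,
\]
so it suffices to show $w_i \in L^2(\mu_2)$ with a bound depending only on the fixed data. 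This reduces the lemma to finite-ness of $\|w_i\|_{L^2(\mu_2)}$.

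The three terms in the definition of $w_i$ are handled separately via H\"older's inequality, exploiting the integrability afforded by the hypotheses. For the first term $\partial_k a_{k\ell}\,(\partial_\ell\partial_j\Psi)$: by \nameref{ass:mat-cond-4} we have $\partial_k a_{k\ell} \in L^{2p_\Sigma}(\mu_2)$, and by \nameref{ass:y-potential-5} together with \Cref{cor:arbitrary-integrability} (applicable since $\Psi(y)=\psi(|y|^2)$ gives the structure needed, and \nameref{ass:y-potential-3} is in force via the assumptions of \Cref{thm:main-result}) we have $|\nabla^2\Psi| \in L^p(\mu_2)$ for every $p<\infty$; choosing $p$ to be the conjugate exponent of $p_\Sigma$ (note $p_\Sigma>1$ so its conjugate is finite), H\"older gives the product in $L^2(\mu_2)$. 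For the third term $a_{k\ell}(\partial_\ell\partial_j\Psi)(\partial_k\Psi)$: the $a_{k\ell}$ are bounded, so it reduces to $(\partial_\ell\partial_j\Psi)(\partial_k\Psi)$, which lies in $L^2(\mu_2)$ because $|\nabla^2\Psi|$ and $|\nabla\Psi|$ are each in every $L^p(\mu_2)$, $p<\infty$, by \Cref{cor:arbitrary-integrability}, and a product of two such functions is in $L^2$ by H\"older. For the second term $a_{k\ell}(\partial_k\partial_\ell\partial_j\Psi)$: again $a_{k\ell}$ bounded, and \nameref{ass:y-potential-6} gives precisely $\partial_k\partial_\ell\partial_j\Psi \in L^2(\mu_2)$. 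Summing the finitely many terms yields a finite bound $R$ depending only on $d_1,d_2$, $\|a_{k\ell}\|_\infty$, $\|\nabla\Sigma\|_{L^{2p_\Sigma}(\mu_2)}$, the $L^p(\mu_2)$-norms of $|\nabla\Psi|$ and $|\nabla^2\Psi|$, and the $L^2(\mu_2)$-norms of the third derivatives.

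The main obstacle is not any single estimate but rather making sure \Cref{cor:arbitrary-integrability} is genuinely available here: it requires $\Phi$ to satisfy \nameref{ass:x-potential-3} and $\Psi$ to satisfy \nameref{ass:y-potential-3}, and the derivation of the $L^p(\mu_2)$-bounds for $|\nabla\Psi|$ passes through \Cref{thm:arbitrary-integrability}, whose hypotheses demand $\Psi \in C^1 \cap H^{2,\infty}_{\mathrm{loc}}$ with the Hessian growth bound. Under the standing assumptions of \Cref{thm:main-result} — where $\Psi(y)=\psi(|\Lambda y - a|^2)$ with $\psi \in C^3$ and $|\nabla^2\Psi|\leq K(1+|\nabla\Psi|^\alpha)$ — all of this is in place, but it should be stated explicitly that \nameref{ass:y-potential-5} (the radial form) is what lets the computation of $G$ and hence the applicability of the earlier framework go through, and that the extra regularity $\psi \in C^3$ is what makes the third-derivative expressions in $w_i$ classically meaningful so that \nameref{ass:y-potential-6} can be invoked. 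Once the bookkeeping of which integrability comes from which hypothesis is done carefully, the proof is a routine application of H\"older's inequality term by term.
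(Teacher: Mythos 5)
Your proof is correct and takes essentially the same approach as the paper: reduce to $\|w_i\|_{L^2(\mu_2)}<\infty$ via Fubini, then bound each of the three summands by H\"older using \nameref{ass:mat-cond-4}, \nameref{ass:y-potential-6}, and the $L^p(\mu_2)$-integrability of $|\nabla\Psi|$ and $|\nabla^2\Psi|$ from \Cref{cor:arbitrary-integrability}. The only cosmetic difference is in the third term, where the paper first invokes \nameref{ass:y-potential-3} to dominate $|\nabla^2\Psi|$ by $K(1+|\nabla\Psi|^\alpha)$ before integrating, whereas you apply the $L^p$-bounds to $|\nabla^2\Psi|$ and $|\nabla\Psi|$ directly via H\"older; the two routes are equivalent.
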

\begin{proof}
	It holds that $\|Tf\|_{L^2(\mu)}^2\leq \sum_{i=1}^{d_1}\|w_i\|_{L^2(\mu_2)}^2 \|\nabla f_P\|_{L^2(\mu_1)}^2$
	and for each $1\leq i\leq d_1$:
	\[
		\|w_i\|_{L^2(\mu_2)}
		\leq |Q| \sum_{j,k,\ell=1}^{d_2}
			\|\partial_k a_{k\ell}(\partial_{\ell}\partial_j\Psi)\|_{L^2}
			+ \|a_{k\ell}(\partial_k\partial_{\ell}\partial_j\Psi)\|_{L^2}
			+ \|a_{k\ell}(\partial_{\ell}\partial_j\Psi)(\partial_k\Psi)\|_{L^2}
	\]
	Let $1\leq q_\Sigma<\infty$ be such that $\frac1{p_\Sigma}+\frac1{q_\Sigma}=1$.
	Then by \nameref{ass:mat-cond-4} and \Cref{cor:arbitrary-integrability}, it holds that
	\[
		\|\partial_k a_{k\ell}(\partial_{\ell}\partial_j\Psi)\|_{L^2}
		\leq \|(\partial_k a_{k\ell})^2\|_{L^{p_\Sigma}} \|(\partial_\ell\partial_j\Psi)^2\|_{L^{q_\Sigma}}
		< \infty.
	\]
	Boundedness of $\Sigma$ together with \nameref{ass:y-potential-6} yields
	\[
		\|a_{k\ell}(\partial_k\partial_{\ell}\partial_j\Psi)\|_{L^2}
		\leq M_\Sigma \|\partial_k\partial_{\ell}\partial_j\Psi\|_{L^2}
		<\infty,
	\]
	and by  \nameref{ass:y-potential-3} and again \Cref{cor:arbitrary-integrability}, we also get
	\[
		\|a_{k\ell}(\partial_{\ell}\partial_j\Psi)(\partial_k\Psi)\|_{L^2}
		\leq KM_\Sigma\left( \|\partial_k\Psi\|_{L^2} + \||\nabla\Psi|^{1+\alpha}\|_{L^2} \right)
		<\infty.
	\]
	This shows that each $w_i$ is in $L^2(\mu_2)$ and therefore $R\defeq \sum_{i=1}^{d_1}\|w_i\|_{L^2(\mu_2)}^2<\infty$.
\end{proof}
Now we show that $(AP)D\subset D(S^*)$.
To that end, let $f\in D$, $h\in D(S)$ and $(h_n)_{n\in\N}$ be a sequence in $D$ such that $h_n\to h$ and $Sh_n\to Sh$ in $H$ as $n\to\infty$.
Then integration by parts applied twice yields
\[
\begin{aligned}
	(Sh,APf)_H
	&=\lim_{n\to\infty}(Sh_n,-Q\nabla\Psi\cdot\nabla(Pf))_H
	=\lim_{n\to\infty}\mu(\langle\nabla_y h_n,\Sigma \nabla_y(Q\nabla\Psi\cdot\nabla_x(Pf))\rangle)\\
	&= \lim_{n\to\infty} (h_n, Tf)_{L^2(\mu)}
	= (h,Tf)_{L^2(\mu)}.
\end{aligned}
\]
Setting $h\equiv 1$ shows that $Tf\in H$ since $Sh=0$, which implies $APD\subseteq D(S^*)$.
Now by \Cref{lem:auxiliary-bound}, it follows that $(BS)^*(I-G)f=S^*APf=Tf$ and
\[
	\|Tf\|_H^2
	\leq R \|\nabla f_P\|_{L^2(\mu_1)}^2
	\leq \frac{d_2R|Q^{-1}|^2}{\mu_2(|\nabla\Psi|^2)} \frac{\mu_2(|\nabla\Psi|^2)}{d_2} \|Q^*\nabla f_P\|_{L^2(\mu_1)}^2.
\]
Further, since via integration by parts
\[
\begin{aligned}
	\|Q^*\nabla f_P\|_{L^2(\mu_1)}^2
	&= -\int_{\R^{d_1}} \sum_{i,j=1}^{d_1} (QQ^*)_{ij}\, f_P\,(\partial_j\partial_if_P-\partial_j\Phi\partial_i f_P)\ \mathrm{d}\mu_1\\
	&= -\int_{\R^{d_1+d_2}} \sum_{i,j=1}^{d_1} (QQ^*)_{ij}\, Pf\,(\partial_{x_j}\partial_{x_i}Pf-\partial_j\Phi\partial_{x_i}Pf)\ \mathrm{d}\mu \\
	&= -\frac{d_2}{\mu_2(|\nabla\Psi|^2)}\int_{\R^{d_1+d_2}} Pf Gf\,\mathrm{d}\mu
\end{aligned}
\]
we obtain for $g\defeq (I-G)f$ due to dissipativity of $(G,D)$ that
\[
\begin{aligned}
	\|(BS)^*g\|_H^2
	&=\|Tf\|_H^2
	\leq \frac{d_2R|Q^{-1}|^2}{\mu_2(|\nabla\Psi|^2)} \int_E Pf\, (-G)f\ \mathrm{d}\mu\\
	&\leq \frac{d_2R|Q^{-1}|^2}{\mu_2(|\nabla\Psi|^2)} \|Pf\|_H (\|(I-G)f\|_H + \|f\|_H) \\
	&\leq \frac{2d_2R|Q^{-1}|^2}{\mu_2(|\nabla\Psi|^2)} \|(I-G)f\|_H^2 
	= \frac{2d_2R|Q^{-1}|^2}{\mu_2(|\nabla\Psi|^2)} \|g\|_H^2.
\end{aligned}
\]
Using the second part of \Cref{lem:auxiliary-bound}, this shows that $(BS,D(S))$ is bounded and hence \nameref{ass:hypoc-3} is fulfilled.

\begin{defn}
Let the functional $\Theta:H\to[0,\infty]$ be defined by \[\Theta f\defeq \|f\|_\mathrm{osc}^2=(\esssup f-\essinf f)^2\]
for all $f\in H$.
\end{defn}
Then the set $\{f\in H:\Theta f<\infty\}\supset D$ is dense in $H$ and \nameref{ass:hypoc-4} is satisfied due to $L^\infty$-contractivity of the semigroups.

We show \nameref{ass:hypoc-5} using the same construction as in \cite{GW19}: Fix some $f\in D(L)$ and set $\gamma_1\defeq\essinf f$, $\gamma_2\defeq \esssup f$.
Let $(g_n)_{n\in\N}$ be a sequence in $D$ such that $g_n\to f$ and $Lg_n\to Lf$ in $H$ as $n\to \infty$. Then set $f_n\defeq h_n(g_n)$, where $h_n\in C^\infty(\R)$ satisfies
$0\leq h_n'\leq 1$ and 
\[
	h_n(r)=\begin{cases}
		r & \text{ for }r\in[\gamma_1,\gamma_2]\\
		\gamma_1-\frac1{2n} & \text{ for }r\leq\gamma_1-\frac1{n}\\
		\gamma_2+\frac1{2n} & \text{ for }r\geq\gamma_2+\frac1{n}
		\end{cases}.
\]
Then $f_n\to f$ in $H$ as $n\to\infty$, $\limsup_{n\to\infty}\|f_n\|_\mathrm{osc}\leq \|f\|_\mathrm{osc}$, and
\[
\begin{aligned}
	\limsup_{n\to\infty} (-Lf_n,f_n)
	&=\limsup_{n\to\infty} \mu(\langle\nabla_y f_n,\Sigma \nabla_y f_n\rangle)
	=\limsup_{n\to\infty} \mu((h_n'(g_n))^2 \langle\nabla_y g_n,\Sigma \nabla_y g_n\rangle)\\
	&\leq \limsup_{n\to\infty} \mu(\langle\nabla_y g_n,\Sigma \nabla_y g_n\rangle)
	=\limsup_{n\to\infty} (-Lg_n,g_n)
	= (-Lf,f).
\end{aligned}
\]
Finally, it remains to show the weak Poincaré inequalities are satisfied. For this, we use \cite[Theorem 3.1]{RW01}, which for our purposes states that, given
a probability measure $\mu_V=\mathrm{e}^V\mathrm{d}x$ on $\R^d$ with locally bounded $V$, there exists a decreasing function $\alpha:(0,\infty)\to(0,\infty)$ such that
\begin{equation}\label{eq:wp-base}
	\mu_V(f^2)\leq \alpha(r)\mu_V(|\nabla f|^2)+r\|f\|_\mathrm{osc}^2,\quad r>0,f\in C_b^1(\R^d).
\end{equation}
The first inequality in \nameref{ass:weak-poincare} is already proved in \cite{GW19}, which also gives a procedure to show the second one:
Let $f\in D$ and $x\in\R^{d_1}$, set $f_x\defeq f(x,\cdot)-Pf(x)\in C^\infty(\R^{d_2})$.
Then $\nabla f_x$ has compact support, so that $f_x$ is bounded; $\mu_2(f_x)=0$ and $\|f_x\|_\mathrm{osc}\leq \|f\|_\mathrm{osc}$.
Therefore, \eqref{eq:wp-base} is applicable and yields the existence of a decreasing function $\alpha_\Psi$ such that
\[
\begin{aligned}
	\mu_2(f_x^2)
	&\leq \alpha_\Psi(r)\mu_2(|\nabla_y f(x,\cdot)|^2)+r\|f_x\|_\mathrm{osc}^2\\
	&\leq c_\Sigma\alpha_\Psi(r)\mu_2(\langle\nabla_y f(x,\cdot),\Sigma\nabla_y f(x,\cdot)\rangle)+r\|f\|_\mathrm{osc}^2
\end{aligned}
\]
for all $r>0$, $f\in D$ and $x\in\R^{d_1}$.
Integrating that expression wrt.~$\mu_1$ gives
\[
\begin{aligned}
	\|(I-P)f\|_H^2
	&=\int_{\R^{d_1}}\mu_2(f_x^2)\,\mu_1(\mathrm{d}x)
	\leq  c_\Sigma\alpha_\Psi(r) \int_E \langle\nabla_y f,\Sigma\nabla_y f\rangle\,\mu(\mathrm{d}(x,y))+r\|f\|_\mathrm{osc}^2\\
	&= c_\Sigma\alpha_\Psi(r) (-Sf,f)_H +r\Theta f
\end{aligned}
\]
for all $f\in D$, $r>0$, so the second inequality in \nameref{ass:weak-poincare} is satisfied with $\alpha_2\defeq c_\Sigma\alpha_\Psi$.
\begin{proof}[of \texorpdfstring{\Cref{thm:main-result}}{Theorem 1.3}]
	Let the assumptions of \Cref{thm:main-result} hold.
	Then clearly all assumptions of \Cref{thm:ess-m-diss} are satisfied (see \Cref{cor:arbitrary-integrability}), which yields the existence of the $C_0$-semigroup $(T_t)_{t\geq 0}$ on $L^2(\mu)$ with the stated properties. The existence of the decreasing functions $\alpha_\Phi$ and $\alpha_\Psi$ is again provided by \cite[Theorem 3.1]{RW01}.
	
	In order to satisfy \nameref{ass:y-potential-5}, we first shift the $y$-variable by $a$ and then consider $\overline{\Psi}(y)\defeq \Psi(\Lambda^{-1}y)=\psi(|y|^2)$, with $\overline{\mu_2}$ and $\overline{\mu}$ defined accordingly. Let $J:L^2(\mu)\to L^2(\overline{\mu})$ be given by $Jf(x,y)\defeq \sqrt{|\det(\Lambda)|}^{-1}f(x,\Lambda^{-1}y)$.
	Then $J$ is a unitary transformation which leaves $D$ invariant, and we define the operator $(\overline{L},D)$ by $\overline{L}\defeq JLJ^{-1}$.
	This operator has the same structure as $L$, except with $\overline{Q}=Q\Lambda^*$ and $\overline{\Sigma}(y)=\Lambda\Sigma(\Lambda^{-1}y)\Lambda^*$.
	So wlog we can assume that already $\Psi(y)=\psi(|y|^2)$. We then restrict the setting to $H\subseteq X$ and consider the closure $(L^H,D(L^H))$ of $(L,D)$ in $H$
	with the corresponding semigroup $(T_t^H)_{t\geq 0}$ on $H$.
	
	It can then easily be checked with the previous considerations in this section that all conditions for \Cref{thm:abstract-hypoc} are met, which gives the existence of constants $0<\tilde{c}_1,c_2<\infty$ such that
	\[
		\mu((T_t^H f)^2)\leq \tilde{c}_1\xi(t)(\mu(f^2)+\|f\|_{\mathrm{osc}}^2),
		\qquad t\geq 0,\ f\in D(L^H)
	\]
	with $\xi$ given as in the statement of \Cref{thm:main-result}.
	Since $L$ and $L^H$ coincide on $D$, the semigroup $(T_t)_{t\geq 0}$ coincides with $(T_t^H)_{t\geq 0}$ on $H$.
	Invariance and conservativity imply that
	\[
		T_t(\mu(f))=\mu(f)T_t(1)=\mu(f)=\mu(T_tf)
		\quad\text{ for all }f\in L^2(\mu).
	\]
	Since $f-\mu(f)\in D(L^H)$ for all $f\in D(L)$, this yields
	\[
		\mu((T_tf)^2)-\mu(T_tf)^2
		=\mu((T_t^H(f-\mu(f)))^2)
		\leq \tilde{c}_1\xi(t)(\mu((f-\mu(f))^2)+\|f\|_{\mathrm{osc}}^2)
	\]
	for all $f\in D(L)$ and $t\geq 0$.
	Since $|f-\mu(f)|\leq \|f\|_\mathrm{osc}$, this shows the wanted estimate with $c_1=2\tilde{c}_1$ for all $f\in D(L)$.
	This extends to $f\in L^\infty(\mu)$ by approximation as in the proof of \nameref{ass:hypoc-5}.
\end{proof}

\section{Applications}

We briefly present direct applications of the hypocoercivity result to PDEs and SDEs.
Since the principles used are the same, we refer to \cite{BG21} for details.
We assume the conditions stated in \Cref{thm:main-result}.

\subsection{The abstract Cauchy problem}\label{sec:cauchy}
We consider the abstract Cauchy problem associated with the operator $L$ on $X=L^2(\mu)$:
\begin{equation}\label{eq:cauchy-kol}
	u(0)=u_0, \qquad
	\partial_t u(t) = Lu(t), \quad \text{ for all }t\geq 0,
\end{equation}
where $(L,D(L))$ is the closure of $(L,\mathcal{C})$ on $X$, $u:[0,\infty)\to X$ and $u_0\in X$.

Let $(T_t)_{t\geq 0}$ denote the semigroup on $X$ generated by $(L,D(L))$
and set $u(t)\defeq T_tu_0$. Then $u$ is the unique mild solution to \eqref{eq:cauchy-kol} for all $u_0\in X$
and the unique classical solution for all $u_0\in D(L)$, so in particular for all $u_0\in \{ f\in C^2(\R^{d_1+ d_2}):\nabla f\text{ has compact support}\}$.

Furthermore, \Cref{thm:main-result} shows that for all bounded measurable $u_0$,
we obtain convergence of the unique solution $u(t)$ to a constant as $t\to\infty$. More precisely, there are explicit constants $c_1,c_2>0$ depending on the choice of $\Sigma$, $\Phi$ and $\Psi$ such that for all $t\geq 0$,
\[
	\left\| u(t)-\int_E u_0\,\mathrm{d}\mu \right\|_X^2 \leq c_1\xi(t) \left\|u_0 \right\|_\mathrm{osc}^2,
\]
with $\xi$ given by \eqref{eq:conv-rate}.

\subsection{The associated Fokker-Planck equation in divergence form}\label{sec:fokker-planck}

Consider the Hilbert space $\widetilde{X}\defeq L^2(E;\widetilde{\mu})$, where
\[
	\widetilde{\mu}\defeq Z(\Phi)^{-\frac12}Z(\Psi)^{-\frac12}
	\mathrm{e}^{\Phi(x)+\Psi(y)}\,\mathrm{d}x\,\mathrm{d}y,
\]
and define the differential operator $(L^\mathrm{FP},\mathcal{C})$ on $\widetilde{X}$ for all $f\in \mathcal{C}=C_c^\infty(\R^{d_1})\otimes C_c^\infty(\R^{d_2})$ as in \eqref{eq:proto-fokker-planck}.
The abstract Cauchy problem associated with the closure $(L^\mathrm{FP},D(L^\mathrm{FP}))$ is given by
\eqref{eq:cauchy-fp}.
Using the unitary transformation
\[
T:X\to \widetilde{X},\quad  Tg=\rho g \quad\text{ with }\quad \rho(x,v)\defeq \mathrm{e}^{-\Phi(x)-\Psi(y)},
\]
as well as the result on the adjoint semigroup $(T_t^*)_{t\geq 0}$ stated at the end of the proof of \Cref{thm:ess-m-diss},
we obtain essential m-dissipativity of $(L^\mathrm{FP}, \mathcal{D})$,
where $\mathcal{D}\defeq C_c^2(E)$ is a $T$-invariant core of $(L^*,D(L^*))$.
A simple approximation argument gives essential m-dissipativity of $(L^\mathrm{FP},\mathcal{C})$ on $\widetilde{X}$, and the semigroup $(\widetilde{T}_t)_{t\geq 0}=(T(T_t^*)T^{-1})_{t\geq 0}$ generated by the closure $(L^\mathrm{FP},D(L^\mathrm{FP}))$ induces solutions of the Cauchy problem as seen in \Cref{sec:cauchy}.

Let $u_0\in\widetilde{X}$ such that $\frac{u_0}{\rho}$ is bounded and set $u(t)\defeq \widetilde{T}_t u_0$. Then \Cref{thm:main-result} yields the existence of constants $c_1$, $c_2$ depending on $\Sigma$, $\Phi$ and $\Psi$ such that for all $t\geq 0$,
\[
\begin{aligned}
	\left\| u(t)-\rho\int_E u_0\rho\,\mathrm{d}\widetilde{\mu} \right\|_{\widetilde{X}}^2
	&= \left\| T(T_t^*)T^{-1}u_0-T(1)\int_E \frac{u_0}{\rho}\,\mathrm{d}\mu \right\|_{\widetilde{X}}^2\\
	&= \left\| (T_t^*)T^{-1}u_0-\int_E T^{-1}u_0\,\mathrm{d}\mu \right\|_X^2
	\leq c_1 \xi(t) \left\|\frac{u_0}{\rho} \right\|_\mathrm{osc}^2,
\end{aligned}
\]
with $\xi$ given by \eqref{eq:conv-rate}. This shows convergence of the solution $u$ to the stationary solution $\rho\int_E u_0\,\mathrm{d}(x,y)$ in $\widetilde{X}$ as $t\to\infty$ with the rate $\xi$.

\subsection{Degenerate Diffusion with multiplicative noise}

Finally, we connect the convergence result to the initially stated Itô SDE \eqref{eq:sde}.
Under the assumptions of \Cref{thm:ess-m-diss}, the theory of Generalized Dirichlet Forms developed by Stannat (see \cite{GDF}) yields the existence of a Hunt process
\[
	\mathbf{M} = (\Omega,\mathcal{F},(\mathcal{F}_t)_{t\geq 0},(X_t,Y_t), (P_{(x,y)})_{(x,y)\in\R^{d_1}\times\R^{d_2}})
\]
with state space $E=\R^{d_1}\times\R^{d_2}$ and $P_{(x,y)}$-a.s. continuous sample paths for all $(x,y)\in E$,
which is properly associated in the resolvent sense with the semigroup $(T_t)_{t\geq 0}$ generated by $(L,D(L))$.
In particular, we obtain that for each $f\in\widetilde{X}$ and all $t>0$, $T_tf$ is a $\mu$-version of $p_tf$, where $(p_t)_{t\geq 0}$ is the transition semigroup of $\mathbf{M}$ given by
\[
	p_tf:\R^{d_1}\times\R^{d_2}\to \R, \qquad
	(x,y)\mapsto \mathbb{E}_{(x,y)}[f(X_t,Y_t)].
\]
Due to \cite[Proposition~1.4]{BBR06}, for any $\mu$-probability density $h\in L^1(\mu)$ (including $h\equiv 1$),
the probability law $P_h\defeq \int_E P_{(x,y)}h(x,y)\,\mathrm{d}\mu$ on the space of continuous paths solves the martingale problem for $(L,D(L))$ on $C_c^2(E)$, i.e. for each $f\in C_c^2(E)$, the stochastic process $(M_t^{[f]})_{t\geq 0}$ given by
\[
	M_t^{[f]}\defeq f(X_t,Y_t)-f(X_0,Y_0)-\int_0^t Lf(X_s,Y_s)\,\mathrm{d}s
\]
is a martingale with respect to $P_h$. Additionally, by \cite[Lemma~2.1.8]{Conrad2011}, for $f\in D(L)$ with $f^2\in D(L)$ and $Lf\in L^4(\mu)$, the process $(N_t^{[f]})_{t\geq 0}$ defined by
\[
	N_t^{[f]}\defeq (M_t^{[f]})^2-\int_0^tL(f^2)(X_s,Y_s)-2(fLf)(X_s,Y_s)\,\mathrm{d}s,
	\quad t\geq 0,
\]
is also a martingale wrt.~$P_h$. Analysis of the coordinate processes together with Lévy's characterization
of Brownian motion shows that $(X_t,Y_t)_{t\geq 0}$ is a weak solution to the SDE \eqref{eq:sde}, since $\sigma=\sqrt{\Sigma}$ is invertible with locally bounded inverse.

Under the assumptions of \Cref{thm:main-result}, we therefore obtain the existence of constants $c_1,c_2$ depending on $\Sigma$, $\Phi$ and $\Psi$ such that
\[
	\left\| p_tf - \mu(f) \right\|_{L^2(E;\mu)}^2 \leq c_1 \xi(t) \|f\|_{\mathrm{osc}}^2
\]
for all bounded measurable $f:E\to\R$. This implies that the law $P_1$ with initial distribution $\mu$ is strongly mixing, i.e. for any two Borel sets $A_1,A_2$ on the space of continuous paths, it holds
\[
	P_1(\varphi_t A_1\cap A_2) \to P_1(A_1) P_1(A_2)
	\quad \text{ as }t\to\infty,
\]
where $\varphi_tA_1$ denotes the set of shifted paths
\[
	\varphi_t A_1 \defeq \{ (Z_s)_{s\geq 0}\in C([0,\infty),E) \mid (Z_{s+t})_{s\geq 0}\in A_1 \}.
\]

\bibliography{sources}
\bibliographystyle{numbered}

\end{document}